\theoremstyle{plain}
\newtheorem{theorem}{Theorem}[section]
\newtheorem{lemma}[theorem]{Lemma}
\newtheorem{corollary}[theorem]{Corollary}
\newtheorem{proposition}[theorem]{Proposition}
\theoremstyle{remark}
\newtheorem{remark}[theorem]{Remark}
\newcommand{\GF}[1]{\mathbb{F}_{#1}}
\newcommand{\SL}[2]{\mathrm{SL}_{#1}(#2)}
\newcommand{\GL}{\mathrm{GL}}
\begin{document}
\title[Modular quotient singularities]{Three-dimensional isolated quotient singularities \\
        in even characteristic}
\author{Vladimir Shchigolev}
\author{Dmitry Stepanov}
\address[Shchigolev]{Financial University under the Government of the Russian Federation, 49 Leningradsky Prospekt, Moscow, Russia}
\email{shchigolev\_vladimir@yahoo.com}
\address[Stepanov]{The Department of Mathematical Modelling \\
         Bauman Moscow State Technical University \\
         2-ya Baumanskaya ul. 5, Moscow 105005, Russia}
\email{dstepanov@bmstu.ru}

\thanks{The first author was supported by RFBR grant no. 16-01-00756. The second author was supported by RFBR grants no. 14-01-00160 and no. 15-01-02164}
\date{}

\begin{abstract}
This paper is a complement to the work of the second author on modular quotient singularities in
odd characteristic. Here we prove that if $V$ is a three-dimensional vector space over a field of
characteristic $2$ and $G<\GL(V)$ is a finite subgroup generated by pseudoreflections and possessing
a $2$-dimensional invariant subspace $W$ such that the restriction of $G$ to $W$ is isomorphic to the
group $\SL{2}{\GF{2^n}}$, then the quotient $V/G$ is non-singular. This, together with
earlier known results on modular quotient singularities, implies first that a theorem of Kemper and Malle
on irreducible groups generated by pseudoreflections generalizes to reducible groups in dimension three,
and, second, that the classification of three-dimensional isolated singularities which are quotients of 
a vector space by a linear finite group reduces to Vincent's classification of non-modular isolated 
quotient singularities.
\end{abstract}

\maketitle

\section{Introduction}

Let $k$ be a field of characteristic $p$ and $V$ a finite dimensional vector space over $k$.
A linear map $\varphi\colon V\to V$ is called a \emph{pseudoreflection} if the set of points
fixed by $\varphi$ is a hyperplane in $V$. A pseudoreflection $\varphi$ is called a
\emph{transvection} if $1$ is the only eigenvalue of $\varphi$. Denote by $V^*$ the dual
space and by $S(V^*)$ its symmetric algebra. In \cite{KM} Kemper and Malle proved the
following theorem.
\begin{theorem}\label{T:KM}
Let $G$ be a finite irreducible subgroup of $\GL(V)$. Then its ring of invariants $S(V^*)^G$
is polynomial if and only if $G$ is generated by pseudoreflections and the pointwise stabilizer
in $G$ of any non-trivial subspace of $V$ has a polynomial ring of invariants.
\end{theorem}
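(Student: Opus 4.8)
The plan is to prove the two implications by quite different methods. Throughout I may assume that $k$ is algebraically closed: polynomiality of $S(V^*)^G$ is equivalent to smoothness of $V/G$ and so is insensitive to extension of the base field, while $G$ survives as a subgroup of $\GL(V\otimes_k\bar k)$; thus I work over $\bar k$ and descend at the end if needed.

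For the implication ``$S(V^*)^G$ polynomial $\Rightarrow$ the two conditions'', that $G$ is generated by pseudoreflections is Serre's theorem, valid in every characteristic, so the real content is the condition on pointwise stabilizers. I would first treat the stabilizer $G_v$ of a single vector $v\in V$. Completing $A:=S(V^*)^G$ at the maximal ideal $\mathfrak m$ of the image $\bar v$ and using that $S(V^*)$ is finite over $A$, the complete semilocal ring $S(V^*)\otimes_A\widehat A_{\mathfrak m}$ is the product of the completions of $S(V^*)$ at the points of the orbit $Gv$; taking $G$-invariants, which commutes with the flat base change $A\to\widehat A_{\mathfrak m}$, yields a natural isomorphism $\widehat A_{\mathfrak m}\cong(\widehat{S(V^*)}_v)^{G_v}$. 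Since $G_v$ fixes $v$, it acts on the coordinates centred at $v$ by the \emph{same} linear representation as on $V^*$, whence $(\widehat{S(V^*)}_v)^{G_v}\cong\widehat{(S(V^*)^{G_v})}_{\mathfrak m_0}$, the completion at the irrelevant ideal; as the left-hand side is regular, $S(V^*)^{G_v}$ is regular at $\mathfrak m_0$ and therefore, being graded and connected, polynomial. For a nontrivial subspace $U$, any $v\in U$ lying outside the finitely many proper subspaces $\ker(g-1)\cap U$ ($g\notin G_U$) has $G_v=G_U$, and the point case finishes the job.

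For the converse, ``the two conditions $\Rightarrow$ $S(V^*)^G$ polynomial'': in the nonmodular case $\mathrm{char}\,k\nmid|G|$ this is the Chevalley--Shephard--Todd theorem and the generation hypothesis alone suffices (the stabilizer hypothesis then being automatic, since pointwise subspace stabilizers of nonmodular pseudoreflection groups are again pseudoreflection groups). The modular case is the crux: ``generated by pseudoreflections'' is now necessary but not sufficient, and I would bring in the classification of finite irreducible subgroups of $\GL(V)$ generated by pseudoreflections in characteristic $p$ (built on work of Zalesskii--Serezhkin, Wagner, Kantor and others): the classical groups $\GL_n(\GF{q})$, $\SL{n}{\GF{q}}$, the symplectic, orthogonal and unitary groups generated by transvections or reflections in their natural modules, the reductions modulo $p$ of Weyl and Shephard--Todd groups, and a finite list of exceptional groups. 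For each entry one either writes down explicit polynomial generators of $S(V^*)^G$ (Dickson invariants and their special-linear analogues in the classical cases, and so on) or, when $S(V^*)^G$ fails to be polynomial, exhibits a nontrivial subspace $U$ with $S(V^*)^{G_U}$ not polynomial, so that the right-hand side of the equivalence fails as well.

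This last step is where I expect the genuine obstacle to lie. It rests on the deep, externally supplied classification of modular irreducible pseudoreflection groups and then on a patient case-by-case analysis: the classical families require their invariant theory, the exceptional groups call for explicit computation, and disposing of the non-polynomial cases demands locating the offending subspace stabilizer---which is delicate, because $G_U$ need not itself be a pseudoreflection group, so one argues via Serre's theorem and descends to $V/U$, where a pseudoreflection group of smaller dimension acts and an induction on $\dim V$ can be applied. By contrast, the forward implication and the entire nonmodular case are soft and invoke only standard structural results.
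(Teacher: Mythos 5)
A point of context first: the paper does not prove Theorem~\ref{T:KM}; it is quoted from Kemper and Malle \cite{KM}, so there is no internal proof to measure yours against. Judged on its own terms, your forward implication is essentially correct and is the standard argument: Serre's theorem gives generation by pseudoreflections, and the completion argument (taking $G$-invariants commutes with the flat base change $A\to\widehat A_{\mathfrak m}$ and with completion, the $G_v$-action linearizes in coordinates centred at $v$, and a graded connected algebra regular at its irrelevant ideal is polynomial) correctly passes polynomiality down to point stabilizers; the generic-vector trick then handles subspace stabilizers, legitimately, since you have passed to an infinite field. One small thing to say explicitly for the converse direction of your reduction to $\bar k$: the space $V\otimes_k\bar k$ has more subspaces than $V$, so the stabilizer hypothesis over $k$ must be shown to imply it over $\bar k$; it does, because the pointwise stabilizer of any $\bar k$-subspace $U'$ coincides with the pointwise stabilizer of the $k$-rational subspace $V^{G_{U'}}$, but this deserves a sentence.

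The genuine gap is the converse, which is where the entire content of the theorem lies. Your treatment of it is a plan, not a proof: ``for each entry of the classification one either writes down explicit polynomial generators or exhibits a nontrivial subspace $U$ with $S(V^*)^{G_U}$ not polynomial'' is a one-sentence description of what is, in \cite{KM}, a long case-by-case analysis occupying most of that paper. Nothing in your text actually verifies polynomiality for the symplectic, orthogonal and unitary transvection groups, for the mod-$p$ reductions of the Weyl and Shephard--Todd groups, or for the exceptional groups; and nothing locates the offending subspace stabilizer for the irreducible pseudoreflection groups whose invariant rings are \emph{not} polynomial --- such groups exist, as the present paper recalls in the Remark following Theorem~\ref{T:even}, and disposing of them is precisely where the stabilizer hypothesis earns its keep. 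Your final paragraph candidly concedes that this is ``where the genuine obstacle lies,'' which is accurate: as submitted, the hard direction of the theorem is asserted, not proved.
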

Kemper and Malle also asked if the condition ``irreducible'' could be eliminated from the statement
of their theorem. They showed that to obtain such a generalization it is sufficient to investigate
the general reducible but non-decomposable case and pointed out that the generalized theorem
holds in dimension $2$. Note that the direct statement of Theorem~\ref{T:KM} (``if the ring
$S(V^*)$ is polynomial, then ...'') is correct without the condition of irreducibility; it follows
from the Chevalley-Shephard-Todd Theorem if $p$ does not divide the order of $G$, and
in the modular case $p\,|\,|G|$ it was proven by Serre.

From the perspective of singularity theory, Stepanov in \cite{Stepanov} showed that if the
generalized (to reducible groups $G$) theorem of Kemper and Malle is correct, it can be
interpreted as saying that each isolated singularity which is a quotient of a vector space by a
finite modular linear group is in fact isomorphic to a quotinet by a non-modular group. Thus the
classification of such singularities reduces to the known Vincet's classification of isolated quotient
singularities in the non-modular case; for the details, see \cite{Stepanov} and references
therein. Stepanov started also studying $3$-dimensional case and obtained the following result.
\begin{theorem}[{\cite[Theorem~4.1]{Stepanov}}]\label{T:odd}
Let $V$ be a $3$-dimensional vector space over an algebraically closed field
of characteristic $p$. Let $G$ be a finite subgroup of $GL(V)$ generated by pseudoreflections.
Denote by $G_p$ the normal subgroup of $G$ generated by all elements of order $p^r$, $r\geq 1$.
Assume that $G_p$ is either
\begin{enumerate}
   \item irreducible on $V$, or
   \item has a $1$-dimensional invariant subspace $U$, or
   \item has a $2$-dimensional invariant subspace $W$ and the restriction of $G_p$
   to $W$ is generated by two non-commuting transvections (and thus is irreducible).
\end{enumerate}
Then the generalized Kemper-Malle Theorem holds for $G$. Moreover, if $G$ satisfies
condition (3) or condition (2) plus the induced action of $G_p$ on $V/U$ is generated by two
non-commuting transvections, then $V/G$ is non-singular.
\end{theorem}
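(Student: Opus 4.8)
The plan is to exploit the normal subgroup $G_p \trianglelefteq G$ together with the fact that $G/G_p$ has order prime to $p$. In characteristic $p$ every pseudoreflection of $p$-power order is a transvection, so $G_p$ is generated by transvections; and since a Sylow $p$-subgroup of $G$ consists of $p$-elements, all of which lie in $G_p$, the quotient $G/G_p$ is a $p'$-group. Writing $S(V^*)^G = \bigl(S(V^*)^{G_p}\bigr)^{G/G_p}$, I would first analyse the invariants of the wild part $G_p$ and then descend through the tame quotient by $G/G_p$, where Chevalley--Shephard--Todd-type arguments become available. The three configurations of $G_p$ in the hypothesis are exactly a partition of the ways a transvection group can sit inside a three-dimensional $V$ (irreducible; fixing a line; or with an invariant plane on which it is irreducible), so the case analysis is driven by the module structure of $V$ over $G_p$.

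For the generalized Kemper--Malle statement I would verify the biconditional by controlling the pointwise stabilizers $G_X$ of the non-trivial subspaces $X \subset V$. When $\dim X = 2$, every non-identity element of $G_X$ fixes the hyperplane $X$ pointwise and is therefore a pseudoreflection with common reflecting hyperplane $X$, so $S(V^*)^{G_X}$ is polynomial by the one-dimensional reflection case. The substantive stabilizers are those of lines $X$ with $\dim X = 1$: such a $G_X$ fixes $X$ pointwise and acts on the two-dimensional quotient $V/X$, which (invoking a Steinberg-type fixed-point result to see $G_X$ is again generated by reflections) reduces the question to dimension two, where the generalized theorem is known, as noted by Kemper and Malle. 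Combining Serre's result for the direct implication with this stabilizer analysis should establish the characterization uniformly in cases (1), (2) and (3).

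For the ``moreover'' part I would show that under the stated hypotheses $S(V^*)^{G_p}$ is already polynomial, so that $V/G_p$ is smooth. In case (3) the restriction of $G_p$ to the invariant plane $W$ is generated by two non-commuting transvections, hence is a finite irreducible subgroup of $\SL{2}{k}$ of Dickson type, whose invariant ring on $W^*$ is polynomial by the classical Dickson invariant theory. As $G_p|_W$ is perfect it acts trivially on the line $V/W$, yielding an invariant linear form; lifting the two Dickson invariants across the $G_p$-stable extension $W \subset V$ and adjoining this form produces a polynomial generating set for $S(V^*)^{G_p}$. Case (2) with the extra hypothesis is the dual situation, with $V/U$ carrying an irreducible two-transvection action and $U$ trivial, handled the same way. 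Once $V/G_p$ is smooth, the $p'$-group $G/G_p$ acts on the polynomial ring $S(V^*)^{G_p}$, the images of the order-prime-to-$p$ pseudoreflections of $G$ act as reflections there, and Chevalley--Shephard--Todd gives that $\bigl(S(V^*)^{G_p}\bigr)^{G/G_p} = S(V^*)^G$ is polynomial, i.e. $V/G$ is non-singular.

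The main obstacle I anticipate is the modular invariant theory of $G_p$ in the reducible cases: showing that the Dickson invariants of the two-dimensional transvection action lift to a polynomial generating set on all of $V$ requires precise control of how $G_p$ glues the invariant plane to the transverse line, and it is exactly here that the failure of Chevalley--Shephard--Todd in characteristic $p$ could obstruct polynomiality. A secondary difficulty is the passage from ``$S(V^*)^{G_p}$ polynomial'' to ``$S(V^*)^G$ polynomial'': one must check that the induced $G/G_p$-action on $\operatorname{Spec} S(V^*)^{G_p}$ is genuinely generated by reflections rather than by tame automorphisms with higher-dimensional fixed loci, a point where Nakajima-type subtleties enter and where the distinction between merely establishing the Kemper--Malle characterization and actually concluding smoothness becomes essential.
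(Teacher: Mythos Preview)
The theorem you are attempting to prove is not proved in this paper: it is quoted verbatim from \cite[Theorem~4.1]{Stepanov} and used here only as background. Consequently there is no proof in the present paper to compare your proposal against; the paper's original contribution is Theorem~\ref{T:even}, which treats precisely the case left open by the cited result.

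That said, your outline is broadly in the spirit of how both \cite{Stepanov} and the present paper proceed. The decomposition $S(V^*)^G=\bigl(S(V^*)^{G_p}\bigr)^{G/G_p}$, the reduction to transvection groups, and the strategy of lifting the two-dimensional Dickson invariants across the invariant flag are exactly the ingredients used in Section~\ref{S:codim1} here for the $\SL{2}{\GF{2^n}}$ case. Your identification of the main obstacle---controlling how the lifted invariants interact with the transverse direction---is accurate; in the present paper this is handled by the explicit cocycle $f$ of Lemma~\ref{L:f} and the lifting construction of Lemma~\ref{L:lift}, and in \cite{Stepanov} an analogous but simpler computation suffices because the two-transvection groups in odd characteristic are smaller.

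One small correction: your justification that $G_p$ acts trivially on the line $V/W$ via ``$G_p|_W$ is perfect'' is not quite right (the group generated by two non-commuting transvections need not be perfect, e.g.\ dihedral groups). The correct reason is that $G_p$ is generated by $p$-elements and $k^*$ has no $p$-torsion, so any character $G_p\to k^*$ is trivial. This does not affect your overall plan.
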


Note that if a map $\varphi\in\GL(W)$, $\dim W=2$, has order $p^r$, $r\geq 1$, then it has
order $p$ and is a transvection. In view of the classification of $2$-dimensional groups
generated by transvections, Theorem~\ref{T:odd} applies to all modular groups in odd
characteristic. In characteristic $2$ it remains to consider only the case when $G$ has a
$2$-dimensional invariant subspace $W$ and the restriction $H$ of $G_2$ to $W$ is isomorphic
to the group $\SL{2}{\GF{2^n}}$ (the group of all $2\times 2$ matrices of determinant $1$
with entries in the Galois field with $2^n$ elements), $n>1$, in its natural representation.

In the present paper we fill this gap and show, moreover, that no singularities arise
in the remaining case $H=\SL{2}{\GF{2^n}}$, $n>1$. Our main result is Theorem~\ref{T:even}
below. As was shown in \cite{Stepanov}, we can assume from the beginning that $G=G_2$ and
the base field $k$ is algebraically closed.

\begin{theorem}\label{T:even}
Let $V$ be a $3$-dimensional vector space over an algebraically closed field $k$ of characteristic
$2$. Let $G$ be a finite subgroup of $\GL(V)$ generated by pseudoreflections of order
$2^r$, $r\geq 1$, and hence by transvections. Assume that $G$ has a $2$-dimensional invariant
subspace $W$ and the restriction of $G$ to $W$ is isomorphic to the group $\SL{2}{\GF{2^n}}$,
$n>1$, in its natural representation. Then the ring of invariants $S(V^*)^G$ is polynomial.
\end{theorem}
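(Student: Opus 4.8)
The plan is to produce, besides the obvious invariant linear form, two further homogeneous invariants that together generate $S(V^*)^{G}$ freely, the whole difficulty being concentrated in an auxiliary two-dimensional \emph{affine} invariant problem. The annihilator $W^{\perp}\subseteq V^*$ of $W$ is a one-dimensional $G$-submodule; since every transvection has $1$ as its only eigenvalue and $G$ is generated by transvections, $G$ acts trivially on $W^{\perp}$, so there is an invariant $0\ne z\in W^{\perp}$. Lifting a basis of $W^*$ to $x,y\in V^*$ gives $S(V^*)=k[x,y,z]$ with $z$ invariant and $S(V^*)/(z)\cong S(W^*)=k[\bar x,\bar y]$, on which $G$ acts through $H:=\SL{2}{\GF{2^n}}$ in its natural representation. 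I would use the classical fact (Dickson) that $S(W^*)^{H}$ is the polynomial ring $k[L,D]$, where $L$ is the product of the $2^n+1$ distinct $\GF{2^n}$-rational linear forms up to scalars, $\deg L=2^n+1$, $\deg D=2^n(2^n-1)$, $\deg L\cdot\deg D=|H|$, and both generators are defined over $\GF{2^n}$.

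Since $z$ is an invariant non-zero-divisor, $S(V^*)^{G}=S(V^*)\cap\bigl(S(V^*)[z^{-1}]\bigr)^{G}$; writing $x_1=x/z$, $y_1=y/z$, one has $S(V^*)[z^{-1}]=k[z^{\pm1}][x_1,y_1]$, on which $G$ fixes $z^{\pm1}$ and acts on $\mathbb{A}^2=\mathrm{Spec}\,k[x_1,y_1]$ by affine transformations: the linear part is $H$ in its natural representation, the translation part takes values in $k$, and each generating transvection is still an affine transvection. It therefore suffices to show that $k[x_1,y_1]^{G}$ is a polynomial ring $k[p,q]$. Indeed $k[x_1,y_1]$ is then a finite $k[p,q]$-module, so the leading forms $p_{\mathrm{top}},q_{\mathrm{top}}$ meet only at the origin and hence are algebraically independent; putting $P=z^{\deg p}\,p(x/z,y/z)$ and $Q=z^{\deg q}\,q(x/z,y/z)\in S(V^*)^{G}$, these reduce modulo $z$ to $p_{\mathrm{top}},q_{\mathrm{top}}$, and a short argument counting poles along $z=0$ in the equality $S(V^*)^{G}=S(V^*)\cap k[z^{\pm1}][P,Q]$ yields $S(V^*)^{G}=k[z,P,Q]$.

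For the two-dimensional problem I would first eliminate the translations. Let $T=\ker(G\to H)$ be the subgroup of pure translations in $G$, a finite $H$-submodule of $W^*$. Then $k[x_1,y_1]^{T}=k[u,v]$ is a polynomial ring with $u,v$ additive polynomials, and -- choosing $u,v$ suitably and using $(a\xi+b\eta)^{2^m}=a\,\xi^{2^m}+b\,\eta^{2^m}$ for $a,b\in\GF{2^n}$ -- the quotient $G/T\cong H$ acts on $k[u,v]$ again by a (Frobenius twist of the) natural representation, together with an inherited translation cocycle. One is thus reduced to the case $G=H=\SL{2}{\GF{2^n}}$ acting affinely on $\mathbb{A}^2$ with linear part (a twist of) the natural representation and translation cocycle $c\in Z^{1}(H,\text{natural module})$.

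The crux is this affine action, and I expect it to be the main obstacle. If $c$ is a coboundary -- equivalently $H$ has a fixed point on $\mathbb{A}^2$, equivalently the extension $0\to W^{\perp}\to V^*\to W^*\to0$ splits -- then after a translation $H$ acts linearly and $k[x_1,y_1]^{H}=S(W^*)^{H}=k[L,D]$, so we are done. The case $[c]\ne0$ genuinely occurs -- already for $\SL{2}{\GF4}\cong A_5$, since $H^{1}(A_5,\text{natural module})\ne0$ (visible from the reduction modulo $2$ of the three-dimensional complex representation of $A_5$) -- and here I would compute $H^{1}(\SL{2}{\GF{2^n}},\text{natural module})$ directly; being one-dimensional for $n\ge2$, it yields, up to isomorphism, a single non-split module $V^*$, for which I would exhibit two homogeneous $G$-invariants of $S(V^*)$ lifting $L$ and $D$ -- for instance from the linear invariants of a Borel subgroup $B\le H$ together with a relative transfer from the stabilizer of a $\GF{2^n}$-rational line, or by a direct formula in the coordinates introduced above -- and check that, together with $z$, they form a homogeneous system of parameters of $S(V^*)$ whose degrees multiply to $|G|$. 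This would establish that $k[x_1,y_1]^{G}$ is polynomial in every case, and the reduction above then gives the theorem.
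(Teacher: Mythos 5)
Your overall architecture matches the paper's: an invariant linear form $z$, reduction modulo the translation kernel, and then an affine action of $\SL{2}{\GF{2^n}}$ on the quotient, split into the coboundary case (linear, decomposable, handled by Dickson/Kemper--Malle) and the non-coboundary case, where one must lift the Dickson generators. But the proposal stops exactly where the real work begins. In the case $[c]\neq 0$ you say you ``would exhibit two homogeneous $G$-invariants lifting $L$ and $D$ --- for instance from \dots a relative transfer from the stabilizer of a $\GF{2^n}$-rational line, or by a direct formula.'' That direct formula is the entire technical content of the theorem: the paper produces the explicit cocycle $f(a,b)=1+a+b+a^{2^{n-1}}b^{2^{n-1}}$, verifies the cocycle identity $pf(a,b)+qf(c,d)+f(p,q)=f(pa+qc,pb+qd)$, lifts each $\GF{2^n}$-rational linear form $l=ax+by$ to $\tilde l=ax+by+g(a,b)z$ with $g=f+1$, and checks that the resulting $\tilde c_0$ still admits a $(2^n-1)$-st root (using homogeneity of $g$) so that both Dickson generators lift. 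None of this is automatic: a transfer from a Borel or a point stabilizer can vanish identically in the modular setting, so ``I would exhibit'' is a promissory note for the crux, not a proof. (Relatedly, surjectivity of the restriction $S(V^*)^G\to S(W^*)^{\SL{2}{\GF{2^n}}}$ is precisely what can fail in general modular situations, as the paper itself remarks; it must be proved, not assumed.)

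There is also a concrete logical error in your rehomogenization step. From ``$k[x_1,y_1]$ is a finite $k[p,q]$-module'' you conclude that the leading forms $p_{\mathrm{top}},q_{\mathrm{top}}$ meet only at the origin. This implication is false: take $p=x_1$, $q=y_1+x_1^2$, so that $k[p,q]=k[x_1,y_1]$ is certainly finite over $k[p,q]$, yet $p_{\mathrm{top}}=x_1$ and $q_{\mathrm{top}}=x_1^2$ vanish on a common line. The statement you need depends on choosing $p,q$ to be dehomogenizations of honest homogeneous invariants whose reductions mod $z$ are the Dickson generators --- which again forces you to construct those homogeneous lifts first. The paper sidesteps the whole localization detour by working with homogeneous invariants from the start and proving polynomiality by the elementary ``subtract $h(\tilde u,\tilde c_1)$, divide by $z$, induct on degree'' argument (its Proposition 3.5), together with Kemper's degree-product criterion for the kernel $N$. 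I would recommend you either adopt that induction or at least fix the leading-form step; and in either case the proof is not complete until the invariant lifts in the non-split case are actually written down and verified.
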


\begin{remark}
It follows from our results that if $G<\GL(V)$, $\dim V=3$, characteristic is arbitrary, is
any finite subgroup generated by pseudoreflections and possessing a $2$-dimensional invariant
subspace or a $1$-dimensional invariant subspace satisfying the additional condition of
Theorem~\ref{T:odd}, then the quotient $V/G$ is non-singular. However, it is not true that
Chevalley-Shephard-Todd Theorem holds for modular groups in dimension $3$. In \cite{KM}
Kemper and Malle give examples of \emph{irreducible} groups $G$ generated by pseudoreflections
for which the ring $S(V^*)^G$ is not polynomial. In dimension $4$, there are examples (see
\cite[Example~11.0.3]{CW}) of reducible groups generated by pseudoreflections with singular quotients.
For general reducible $3$-dimensional groups $G$ generated by pseudoreflections, we do not
know if the quotient $V/G$ can be singular.
\end{remark}

As we explained above, our results and Theorem~\ref{T:KM} of Kemper and Malle imply the
following corollaries.
\begin{corollary}
The generalized Kemper-Malle Theorem holds in dimension $3$, i.e., if $V$ is a $3$-dimensional
vector space and $G<\GL(V)$ is \emph{any} finite subgroup, then the ring of invariants
$S(V^*)^G$ is polynomial if and only if $G$ is generated by pseudoreflections and the pointwise
stabilizer in $G$ of any non-trivial subspace of $V$ has a polynomial ring of invariants.
\end{corollary}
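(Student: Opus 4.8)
The plan is to establish the two implications separately, taking the ``only if'' direction from classical results and the ``if'' direction from Theorems~\ref{T:KM}, \ref{T:odd} and~\ref{T:even}. The ``only if'' direction needs no three-dimensional hypothesis at all: for an arbitrary finite group $G<\GL(V)$, if $S(V^*)^G$ is polynomial, then $G$ is generated by pseudoreflections and the pointwise stabilizer of every non-trivial subspace again has a polynomial ring of invariants. This is precisely the direct statement recorded after Theorem~\ref{T:KM}, valid in all dimensions and characteristics (Chevalley-Shephard-Todd when $p\nmid|G|$, and Serre's theorem in the modular case), so I would simply invoke it.

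For the ``if'' direction I assume $G$ is generated by pseudoreflections and that every pointwise stabilizer has a polynomial ring of invariants, and I want $S(V^*)^G$ to be polynomial. When $p\nmid|G|$ this is Chevalley-Shephard-Todd, so I assume $p\mid|G|$. Following the reductions of \cite{Stepanov} I would first pass to the case $G=G_p$, so that $G$ is generated by transvections, over an algebraically closed field $k$, and then analyze the structure of $V$ as a $G$-module. If $V$ is irreducible, Theorem~\ref{T:KM} (equivalently Theorem~\ref{T:odd}(1)) gives the conclusion. If $V$ has a one-dimensional invariant subspace, Theorem~\ref{T:odd}(2) applies; this case in particular subsumes every decomposable $V$, since in dimension three any splitting into two or more nonzero summands contains a one-dimensional summand. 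In the only remaining case $V$ has a two-dimensional invariant subspace $W$ but no invariant line, so the restriction $H$ of $G$ to $W$ is a finite irreducible subgroup of $\mathrm{SL}(W)\cong\SL{2}{\bar k}$ generated by transvections; by the classification of such groups, either $H$ is generated by two non-commuting transvections and Theorem~\ref{T:odd}(3) applies, or $p=2$ and $H\cong\SL{2}{\GF{2^n}}$ with $n>1$ and Theorem~\ref{T:even} applies. In every case $S(V^*)^G$ is polynomial.

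The decisive point, and the only genuinely new input, is the last subcase. In characteristic $2$ a transvection is an involution, so two non-commuting transvections can generate only a dihedral group, which is solvable and can never be one of the simple groups $\SL{2}{\GF{2^n}}$ with $n>1$; this is exactly why, unlike in odd characteristic, an irreducible two-dimensional transvection group can fail to be captured by Theorem~\ref{T:odd}(3), leaving the configuration $H\cong\SL{2}{\GF{2^n}}$ that Theorem~\ref{T:even} is designed to handle. The remaining effort is bookkeeping: verifying that the reduction to $G=G_p$ over an algebraically closed field is harmless for the polynomiality question (carried out in \cite{Stepanov}) and that the cases above exhaust the three-dimensional modular reflection groups. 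Once this is in place no further computation is required, as Theorems~\ref{T:KM}, \ref{T:odd} and~\ref{T:even} deliver the conclusion in each case.
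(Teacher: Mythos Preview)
Your proposal is correct and mirrors the paper's own reasoning. The paper does not give a separate proof of this corollary; it simply states that the corollary follows from the discussion in the introduction together with Theorems~\ref{T:KM}, \ref{T:odd}, and~\ref{T:even}, and your argument spells out precisely that discussion---the classical ``only if'' direction, the reduction to $G=G_p$ over an algebraically closed field from \cite{Stepanov}, and the case split on the $G_p$-module structure of $V$, with Theorem~\ref{T:even} closing the one gap left open by Theorem~\ref{T:odd} in characteristic~$2$.
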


\begin{corollary}
If $V$ is a $3$-dimensional vector space over an arbitrary field $k$, and $G$ a finite subgroup
of $\GL(V)$ such that the variety $V/G$ has isolated singularity, then $V/G$ is isomorphic to one
of the non-modular isolated quotient singularities from Vincent's classification.
\end{corollary}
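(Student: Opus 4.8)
The plan is to deduce the statement by assembling three inputs: the generalized Kemper--Malle theorem in dimension three (the preceding Corollary), Stepanov's reduction of modular isolated quotient singularities to non-modular ones from \cite{Stepanov}, and Vincent's classification of the latter. First I would locate the singular point and reduce the ground field. Since $G$ acts linearly, the scaling action of $\mathbb{G}_m$ on $V$ descends to $V/G$, its unique fixed point is the image $[0]$ of the origin, so any isolated singularity of $V/G$ must occur at $[0]$; equivalently, $V/G$ is smooth at $[x]$ for every $x\neq 0$. Passing to $\bar k$ preserves the isolated-singularity hypothesis, and since Vincent's classification is phrased over an algebraically closed field I may assume $k=\bar k$, the general case following by base change. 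If $\operatorname{char} k$ does not divide $|G|$ the singularity is non-modular and the statement is exactly Vincent's classification (which over any field of characteristic coprime to $|G|$ reproduces the classical list by lifting to characteristic zero). Thus the only case requiring work is the modular one, $p=\operatorname{char} k$ dividing $|G|$.

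The engine of the modular case is the fact, standard for linear actions, that $V/G$ is smooth at $[x]$ if and only if the invariant ring $S(V^*)^{G_x}$ of the stabilizer $G_x$ is polynomial; combined with the generalized Kemper--Malle theorem this lets me analyze $V/G$ through its pseudoreflections. Here I record that the preceding Corollary is now unconditional. By \cite{Stepanov} the criterion reduces to the action of the normal subgroup $G_p$ generated by the elements of order a power of $p$; the three cases of Theorem~\ref{T:odd} settle all modular groups in odd characteristic and all but one possibility in characteristic $2$. As noted after Theorem~\ref{T:odd}, by the classification of two-dimensional transvection groups the single remaining possibility is that $G$ has a $2$-dimensional invariant subspace on which $G_2$ restricts to $\SL{2}{\GF{2^n}}$ with $n>1$, and this is exactly Theorem~\ref{T:even}. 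Hence the generalized Kemper--Malle criterion is available in dimension three.

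Now let $R\trianglelefteq G$ be the subgroup generated by all pseudoreflections of $G$. The crucial step is to show that $S(V^*)^R$ is polynomial, i.e.\ that $V/R$ is isomorphic to an affine space $V'$. I verify this through the generalized Kemper--Malle criterion: $R$ is generated by pseudoreflections, and for each nontrivial subspace $U$ the pointwise stabilizer $R_U$ has polynomial invariants, the latter being extracted from the smoothness of $V/G$ at points $[x]$ with $x\in U\setminus\{0\}$ (so that the relevant stabilizers are reflection groups with polynomial invariants), exactly as in \cite{Stepanov}. Granting this, the residual group $\bar G=G/R$ acts linearly on $V'=V/R$, one has $V/G\cong V'/\bar G$, and by construction $\bar G$ contains no pseudoreflection.

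It remains to see that $\bar G$ is non-modular. For any $x'\in V'\setminus\{0\}$, smoothness of $V'/\bar G=V/G$ at $[x']$ gives that $S((V')^{*})^{\bar G_{x'}}$ is polynomial, so by Serre's theorem $\bar G_{x'}$ is generated by pseudoreflections; since $\bar G$ has none, $\bar G_{x'}=1$, and $\bar G$ acts freely on $V'\setminus\{0\}$. But in characteristic $p$ every element of order $p$ is unipotent and hence fixes a nonzero vector, which would violate freeness; therefore $p\nmid|\bar G|$ and $\bar G$ is non-modular (and nontrivial, since $[0]$ is singular). Consequently $V/G\cong V'/\bar G$ is a three-dimensional non-modular isolated quotient singularity, which Vincent's classification identifies, completing the proof. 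The main obstacle is the polynomiality of $S(V^*)^R$: it is precisely what forces the use of the full generalized Kemper--Malle theorem, and hence of the new Theorem~\ref{T:even} in characteristic $2$; the verification of the stabilizer hypotheses for $R$ from the isolated-singularity condition is the technical heart and is the content of \cite{Stepanov}.
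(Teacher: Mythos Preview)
Your proposal is correct and follows essentially the same route as the paper. The paper does not give a separate proof of this corollary: it simply states that the result follows from the generalized Kemper--Malle theorem in dimension three (the preceding corollary, which now holds by Theorems~\ref{T:odd} and~\ref{T:even}) together with the reduction argument of \cite{Stepanov} and Vincent's classification. What you have written is an expansion of exactly that chain, unpacking the content of \cite{Stepanov}: pass to $\bar k$, let $R\trianglelefteq G$ be generated by the pseudoreflections, use isolated smoothness plus Serre to see that every stabilizer $G_x$ for $x\neq 0$ is a pseudoreflection group with polynomial invariants (hence $G_x=R_x$), feed this into the generalized Kemper--Malle criterion to conclude $S(V^*)^R$ is polynomial, and then observe that $\bar G=G/R$ acts freely off the origin and is therefore non-modular, so $V/G\cong (V/R)/\bar G$ is a non-modular isolated quotient singularity covered by Vincent.

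One small point of care: your assertion that $\bar G$ acts \emph{linearly} on $V'=V/R$ is not automatic from $S(V^*)^R$ being polynomial, since the homogeneous generators may have different degrees and $\bar G$ acts a priori only by graded automorphisms. This is dealt with in \cite{Stepanov} (and you rightly cite it for the technical heart), so the deduction stands; just be aware that ``linearly'' here requires the argument from that reference rather than being immediate.
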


We prove our Theorem~\ref{T:even} by a more or less direct computation of the ring of
invariants of the group $G$. The proof is contained below in Sections~\ref{S:extofSL} and \ref{S:codim1}.

\section{Proof of Theorem~\ref{T:even}: the group $G$ as an extension of  $\SL{2}{\GF{2^n}}$}\label{S:extofSL}

Assume that a group $G$ satisfies the conditions of Theorem~\ref{T:even}, i.e.,
$G$ is generated by transvections, acts on a $3$-dimensional vector space $V$ with a $2$-dimensional
invariant subspace $W$, and the restriction of $G$ to $W$ is isomorphic to the natural action of
the group $\SL{2}{\GF{2^n}}$ on the space $k^2$ of column vectors. We shall fix a basis
$(e_1,e_2,e_3)$ of $V$ such that $e_1$ and $e_2$ span $W$ and each element of the group $G$
is represented in this basis by a matrix
$$\begin{pmatrix}
a & b & \alpha \\
c & d & \beta \\
0 & 0 & 1
\end{pmatrix},
$$
where $a,b,c,d\in\GF{2^n}\subset k$, $ad+bc=1$, $\alpha,\beta\in k$. We have an exact sequence
of groups
\begin{equation}\label{E:sequence}
0 \to N \to G \to \SL{2}{\GF{2^n}} \to 1,
\end{equation}
where $N$ is the kernel of the natural restriction map. In our basis, $N$ consists of the matrices
$$\begin{pmatrix}
1 & 0 & \alpha \\
0 & 1 & \beta \\
0 & 0 & 1
\end{pmatrix},
$$
where the column $(\alpha,\beta)^T$ varies in some finite subset $\Lambda$ of $k^2$.
Denote by $\Lambda_1$ the projection of $\Lambda$ to the first coordinate.

\begin{lemma}
The sets $\Lambda$ and $\Lambda_1$ have natural structures of vector spaces over the Galois field $\GF{2^n}$.
Moreover $\Lambda=(\Lambda_1,\Lambda_1)^T$ and $\dim_{\GF{2^n}}\Lambda=2\dim_{\GF{2^n}}\Lambda_1$.
\end{lemma}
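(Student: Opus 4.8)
The plan is to read off the $\GF{2^n}$-module structure of $\Lambda$ from the conjugation action of $G$ on its normal subgroup $N$, using $\mathrm{char}\,k=2$ in an essential way. First, multiplying two matrices of the stated form shows that $N$ is abelian and that $\nu\mapsto(\alpha,\beta)^T$ is a group isomorphism from $N$ onto $\Lambda$ with coordinatewise addition; since the characteristic is $2$, this already makes $\Lambda$, and hence its projection $\Lambda_1$, an $\GF{2}$-vector space. Next, since $N\trianglelefteq G$, the group $G$ acts on $N$ by conjugation. Writing a general element of $G$ in block form $\begin{pmatrix} A & b \\ 0 & 1\end{pmatrix}$ with $A\in\SL{2}{\GF{2^n}}$, $b\in k^2$, and a general element of $N$ as $\begin{pmatrix} I & v \\ 0 & 1\end{pmatrix}$ with $v=(\alpha,\beta)^T$, a one-line computation gives that the conjugate is $\begin{pmatrix} I & Av \\ 0 & 1\end{pmatrix}$ (the term $b$ cancels, so the action is manifestly well defined and depends only on $A$). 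Hence $\Lambda\subseteq k^2$ is a subset stable under the natural action of $\SL{2}{\GF{2^n}}$; note that only the surjectivity of $G\to\SL{2}{\GF{2^n}}$ is used here, not that $G$ is generated by transvections.

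Now I would exploit two specific elements of $\SL{2}{\GF{2^n}}$. The matrix $\begin{pmatrix} 0 & 1 \\ 1 & 0\end{pmatrix}$, which has determinant $1$ in characteristic $2$, swaps the coordinates, so $\Lambda$ is symmetric under this swap; in particular the projection of $\Lambda$ to the second coordinate is again $\Lambda_1$, which gives $\Lambda\subseteq\Lambda_1\times\Lambda_1$ and also shows that every $\beta\in\Lambda_1$ occurs as a second coordinate of an element of $\Lambda$. For the transvection $\begin{pmatrix} 1 & \lambda \\ 0 & 1\end{pmatrix}$ with $\lambda\in\GF{2^n}$ and any $(\alpha,\beta)^T\in\Lambda$ we obtain $(\alpha+\lambda\beta,\beta)^T\in\Lambda$; adding this to $(\alpha,\beta)^T$ and using $\mathrm{char}\,k=2$ yields $(\lambda\beta,0)^T\in\Lambda$. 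As $\beta$ runs over $\Lambda_1$, this shows at once that $\Lambda_1$ is closed under multiplication by $\GF{2^n}$ — so, being already an $\GF{2}$-subspace, it is an $\GF{2^n}$-subspace of $k$ — and (taking $\lambda=1$) that $\Lambda_1\times\{0\}\subseteq\Lambda$. Combining the latter with the swap-symmetry gives $\{0\}\times\Lambda_1\subseteq\Lambda$, hence $\Lambda_1\times\Lambda_1\subseteq\Lambda$; together with the reverse inclusion already noted, $\Lambda=(\Lambda_1,\Lambda_1)^T$. This set is visibly an $\GF{2^n}$-subspace of $k^2$, and the equality $\dim_{\GF{2^n}}\Lambda=2\dim_{\GF{2^n}}\Lambda_1$ is immediate.

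The argument is short, and I do not expect a genuine obstacle. The one step worth isolating is the passage from ``$\Lambda$ is $\SL{2}{\GF{2^n}}$-stable'' to ``$\Lambda_1$ is $\GF{2^n}$-stable'': this does not follow formally, since $\SL{2}{\GF{2^n}}$ contains no nontrivial scalar matrices in characteristic $2$, and it genuinely relies on the ``transvection plus self-addition'' trick, which is available precisely because $\mathrm{char}\,k=2$.
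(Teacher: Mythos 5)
Your proof is correct and follows essentially the same route as the paper: identify the conjugation action of $G$ on $N$ with the natural left action of $\SL{2}{\GF{2^n}}$ on columns, then combine transvections, the swap matrix $\left(\begin{smallmatrix}0&1\\1&0\end{smallmatrix}\right)$, and characteristic-$2$ cancellation to extract the $\GF{2^n}$-module structure. Your single identity $(\alpha+\lambda\beta,\beta)^T+(\alpha,\beta)^T=(\lambda\beta,0)^T$ in fact delivers both the scalar stability of $\Lambda_1$ and the equality $\Lambda=\Lambda_1\times\Lambda_1$ a bit more directly than the paper's two-transvection computation, but the underlying idea is identical.
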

\begin{proof}
Obviously, $N$ is an abelian group, and thus $\Lambda$ is a subgroup of $k^2$. It remains to show
that $\Lambda$ is preserved by multiplication by an element $e\in\GF{2^n}$. Note that, as always
in extensions with abelian $N$, the quotient group $\SL{2}{\GF{2^n}}$ acts on $N$ via conjugation.
In our case, this action is nothing else but the left multiplication of a column $(\alpha,\beta)^T$
by a matrix
$$\begin{pmatrix}
a & b \\
c & d
\end{pmatrix} \in \SL{2}{\GF{2^n}}.
$$
So, we have
$$\begin{pmatrix}
1 & e \\
0 & 1
\end{pmatrix},
\begin{pmatrix}
1 & 0 \\
e & 1
\end{pmatrix} \in \SL{2}{\GF{2^n}}, \;
\begin{pmatrix}
\alpha \\
\beta
\end{pmatrix} \in \Lambda \Rightarrow
$$
$$
\begin{pmatrix}
\alpha+e\beta \\
\beta
\end{pmatrix},
\begin{pmatrix}
\alpha \\
e\alpha+\beta
\end{pmatrix} \in \Lambda \Rightarrow
e \begin{pmatrix}
\beta \\
\alpha
\end{pmatrix} \in \Lambda.
$$
But
$$
\begin{pmatrix}
0 & 1 \\
1 & 0
\end{pmatrix} \in \SL{2}{\GF{2^n}} \Rightarrow
e \begin{pmatrix}
\alpha \\
\beta
\end{pmatrix} \in \Lambda.
$$

Multiplying a column $(\alpha,\beta)^T\in\Lambda$ by matrices from the subgroup
$\SL{2}{\GF{2}}<\SL{2}{\GF{2^n}}$, one readily checks that the set $\Lambda$ also contains
$(\alpha,0)^T$, $(0,\beta)^T$, $(0,\alpha)^T$, and $(\beta,0)^T$.
The remaining statements follow directly from this fact.
\end{proof}

The following proposition describes a convenient set of generators of the group $\SL{2}{\GF{2^n}}$.
\begin{proposition}
The group $\SL{2}{\GF{2^n}}$ is generated by the matrices
$$R=\begin{pmatrix}
e^{-1} & 0 \\
0 & e
\end{pmatrix}, \:
S=\begin{pmatrix}
1 & 1 \\
0 & 1
\end{pmatrix}, \:
T=\begin{pmatrix}
1 & 0 \\
1 & 1
\end{pmatrix},
$$
where $e$ is a generator of the multiplicative group $\GF{2^n}^*$ of the field $\GF{2^n}$.
\end{proposition}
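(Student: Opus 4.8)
The plan is to set $H=\langle R,S,T\rangle$ and to show that $H$ already contains every upper-unitriangular and every lower-unitriangular matrix; once that is done, the equality $H=\SL{2}{\GF{2^n}}$ follows from the classical fact that such matrices generate $\SL_2$ over any field (Gaussian elimination, or the Bruhat decomposition). So the only real content is to manufacture the \emph{whole} root subgroups out of the single transvections $S$ and $T$ together with the diagonal element $R$.

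First I would conjugate $S$ and $T$ by powers of $R$. Since $R^k=\mathrm{diag}(e^{-k},e^k)$, a one-line computation gives
\[
R^kSR^{-k}=\begin{pmatrix}1&e^{-2k}\\0&1\end{pmatrix},\qquad
R^kTR^{-k}=\begin{pmatrix}1&0\\e^{2k}&1\end{pmatrix}\qquad(k\in\Z).
\]
Because $2^n-1$ is odd, $\gcd(2,2^n-1)=1$, so $e^2$ is again a generator of $\GF{2^n}^*$ and hence $\{e^{2k}:k\in\Z\}=\GF{2^n}^*$. The upper-unitriangular matrices form a subgroup isomorphic to $(\GF{2^n},+)$, so multiplying the matrices above shows that $H$ contains $\left(\begin{smallmatrix}1&t\\0&1\end{smallmatrix}\right)$ for every $t$ in the additive subgroup $A\le(\GF{2^n},+)$ generated by $\GF{2^n}^*$. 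Now $A$ is nonzero and stable under multiplication by $\GF{2^n}^*$ (such multiplication merely permutes the generating set), so $A$ is a nonzero ideal of the field $\GF{2^n}$ and therefore $A=\GF{2^n}$. Thus $H$ contains the full upper root subgroup, and symmetrically the full lower root subgroup.

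To finish I would run the usual reduction: given a matrix of determinant $1$, if its lower-left entry is zero then its upper-left entry is a unit and one lower transvection makes the lower-left entry nonzero; once the lower-left entry is nonzero, one upper transvection normalizes the upper-left entry to $1$, one lower transvection clears the lower-left entry, and the determinant condition then forces the remaining matrix to be upper-unitriangular. Hence every element of $\SL{2}{\GF{2^n}}$ lies in $H$. I do not expect a genuine obstacle: the only hypothesis actually used is that the characteristic is $2$ (equivalently, that $|\GF{2^n}^*|$ is odd), which is exactly what makes $e^2$ a generator and $\GF{2^n}^*$ additively span $\GF{2^n}$; the mildly fiddly part is the bookkeeping in the Gaussian-elimination step, including the degenerate case where the lower-left entry starts out zero.
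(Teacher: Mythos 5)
Your proof is correct, and its key step --- producing the full root subgroups by conjugating $S$ and $T$ by powers of $R$, using that $|\GF{2^n}^*|=2^n-1$ is odd so that $e^2$ is again a generator --- is exactly the paper's trick, phrased there as the existence of unique square roots in $\GF{2^n}$ (the paper conjugates $S$ by $R^{r/2}$ to produce $\bigl(\begin{smallmatrix}1&e^r\\0&1\end{smallmatrix}\bigr)$). The only difference is the concluding reduction: the paper invokes the standard fact that $\SL{2}{\GF{2^n}}$ is generated by the diagonal subgroup, the upper unipotent subgroup and the Weyl element $STS$, whereas you re-derive generation from the two unipotent root subgroups by Gaussian elimination; both are routine, and your version is marginally more self-contained. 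One small redundancy: since $\{e^{2k}:k\in\Z\}$ already exhausts $\GF{2^n}^*$, the detour through the additive subgroup $A$ generated by $\GF{2^n}^*$ is unnecessary.
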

\begin{proof}
It is well known (see, e.g., \cite[Chapter 1]{Bonnafe}) that $\SL{2}{\GF{2^n}}$ is generated by
its subgroup of diagonal matrices, the subgroup of upper triangular unipotent matrices, and the
element
$$STS=\begin{pmatrix}
0 & 1 \\
1 & 0
\end{pmatrix}.
$$
If we are given the elements $R$, $S$, $T$, we can get any matrix
$$\begin{pmatrix}
1 & e^r \\
0 & 1
\end{pmatrix}
$$
as $R^{-r/2}SR^{r/2}$, where
$$R^{r/2}=\begin{pmatrix}
e^{-r/2} & 0 \\
0 & e^{r/2}
\end{pmatrix}
$$
(recall that each element of $\GF{2^n}$ has a unique square root in $\GF{2^n}$).
\end{proof}

\begin{remark}
Note that the matrices $S$ and $T$ generate the group $\SL{2}{\GF{2}}$.
\end{remark}

In our next step we show that sequence \eqref{E:sequence} splits.
\begin{lemma}\label{L:generators}
After a change of the basis vector $e_3$, if necessary, we can assume that the group $G$
contains matrices
$$\tilde{S}=\begin{pmatrix}
1 & 1 & 0 \\
0 & 1 & 0 \\
0 & 0 & 1
\end{pmatrix}, \:
\tilde{T}=\begin{pmatrix}
1 & 0 & 0 \\
1 & 1 & 0 \\
0 & 0 & 1
\end{pmatrix},
$$
and one of the matrices
$$\tilde{R}=\begin{pmatrix}
e^{-1} & 0 & 1 \\
0 & e & e \\
0 & 0 & 1
\end{pmatrix} \text{ or }
\tilde{R}'=\begin{pmatrix}
e^{-1} & 0 & 0 \\
0 & e & 0 \\
0 & 0 & 1
\end{pmatrix}.
$$
\end{lemma}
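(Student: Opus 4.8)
The plan is to lift the generators $R$, $S$, $T$ of $\SL{2}{\GF{2^n}}$ from the previous proposition to elements of $G$ and then adjust them, using the freedom in the choice of $e_3$ and the structure of the extension, so that the unipotent generators become the stated matrices $\tilde S$, $\tilde T$ and the diagonal generator becomes either $\tilde R$ or $\tilde R'$. Since the sequence \eqref{E:sequence} is exact, pick arbitrary preimages in $G$; each has the form of the general matrix with a column $(\alpha,\beta)^T$ appended, the $2\times 2$ block being $S$, $T$, or $R$ respectively. Changing $e_3$ to $e_3 + \mu_1 e_1 + \mu_2 e_2$ conjugates $G$ by a matrix of $N$-type with parameters $(\mu_1,\mu_2)$; this shifts the column of a lift with $2\times 2$ part $M$ by $(\mathrm{Id}-M)(\mu_1,\mu_2)^T$. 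So the strategy is: first kill the column of the lift of $S$, then of $T$, then analyze what remains for $R$.

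First I would handle $S$. A lift $\tilde S_0$ of $S$ has column $(\alpha,\beta)^T$; since $1$ is the only eigenvalue of $S$, the image of $\mathrm{Id}-S$ is the line $k\cdot e_1$, so a change of $e_3$ can only adjust $\alpha$, not $\beta$. However, multiplying $\tilde S_0$ on the left by the element of $N$ with parameters $(-\alpha,-\beta)^T$ — which lies in $G$ provided $(\alpha,\beta)^T\in\Lambda$ — would clear the column entirely; one must check that this element is indeed in $N$, i.e. that $(\alpha,\beta)^T\in\Lambda$. Here I expect one uses that $S$ itself, together with the relation $\tilde S_0^{\,2^r}\in N$ or commutator relations among the lifts, forces the relevant column to lie in $\Lambda$; alternatively, one observes that since $G$ is generated by \emph{transvections}, a lift of the transvection $S$ can be taken to be a transvection of $V$, and a transvection fixing a hyperplane containing $e_1$ and acting as $S$ on $W$ must, after adjusting $e_3$, be exactly $\tilde S$. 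The same argument applied to $T$ gives $\tilde T\in G$; one must verify that the $e_3$-adjustment clearing $T$'s column does not disturb $\tilde S$, which holds because the residual freedom in $e_3$ after fixing $\tilde S$ is along $e_1$ and one checks directly that such a shift leaves $\tilde S$ unchanged while it can still be used (or is no longer needed) for $\tilde T$.

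Finally, for $R$: a lift $\tilde R_0$ has $2\times 2$ part $R$ and some column $(\alpha,\beta)^T$. Now $\mathrm{Id}-R = \mathrm{diag}(1-e^{-1},\,1-e)$ is invertible (as $e\neq 1$ since $n>1$), so a priori a change of $e_3$ can clear the column completely, giving $\tilde R'$. The subtlety is that we have \emph{already spent} our freedom in $e_3$ to normalize $\tilde S$ and $\tilde T$; we cannot move $e_3$ any further without reintroducing columns there. So instead one must compute the column of $\tilde R_0$ that is forced by compatibility with the already-normalized $\tilde S$, $\tilde T$: conjugating $\tilde S$ by $\tilde R_0$ must land in $G$, and more precisely $\tilde R_0 \tilde S \tilde R_0^{-1}$ is a lift of $RSR^{-1} = \begin{pmatrix} 1 & e^{-2} \\ 0 & 1\end{pmatrix}$, and since powers/products built from $\tilde S$, $\tilde T$ realize all such unipotent elements \emph{with zero column}, comparing columns pins down a linear condition on $(\alpha,\beta)^T$. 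Working this out, the column is forced to lie in a specific one-dimensional set, and normalizing the scalar via the square-root trick (each element of $\GF{2^n}$ has a unique square root) reduces it to the two possibilities $(1,e)^T$, giving $\tilde R$, or $(0,0)^T$, giving $\tilde R'$.

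The main obstacle I anticipate is the bookkeeping in the last paragraph: once $e_3$ is fixed by the normalization of $\tilde S$ and $\tilde T$, one has no geometric freedom left, and the column of the lift of $R$ is whatever the group structure dictates; showing it can be brought into the two listed forms requires carefully using the commutation relations between $R$ and $S$ (and $T$) inside $\SL{2}{\GF{2^n}}$, lifting them to $G$, and tracking how the columns add under multiplication — together with the fact from the first lemma that $\Lambda = (\Lambda_1,\Lambda_1)^T$ is a $\GF{2^n}$-subspace, which is what allows the scalar normalization by a square root to stay inside $G$. A secondary point to be careful about is ensuring that the successive normalizations (clearing $\tilde S$'s column, then $\tilde T$'s) are compatible — i.e. that the residual $e_3$-freedom after the first step genuinely suffices for the second, or that no second adjustment is needed because the transvection realizing $T$ automatically has the right shape.
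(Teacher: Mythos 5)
Your first half is essentially the paper's route. The key input is that $G$ contains \emph{transvections} lifting $S$ and $T$ (the paper cites \cite[Lemma~4.4]{Stepanov} for this); a transvection lifting $S$ automatically has $\beta=0$ (rank-one condition on $\tilde S+\mathrm{Id}$), and the two residual shifts of $e_3$ (along $e_2$ for $\tilde S$, along $e_1$ for $\tilde T$) are independent and do not disturb each other, which is the coordinate form of the paper's observation that the fixed planes of the two transvections meet in a line not contained in $W$. Your other alternative --- left-multiplying by an element of $N$ to clear the column --- is circular: it presupposes that the column of some lift lies in $\Lambda$, i.e.\ that the coset of $\Lambda$ formed by the columns of all lifts of $S$ contains $0$, which is exactly the splitting statement being established.

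The $R$ step, however, has a genuine gap. First, $\tilde S$ and $\tilde T$ generate only (a lift of) $\SL{2}{\GF{2}}$, so at this stage you have no zero-column realization of $RSR^{-1}=\left(\begin{smallmatrix}1&e^{-2}\\0&1\end{smallmatrix}\right)$ to compare against; concretely, $\tilde R_0\tilde S\tilde R_0^{-1}$ has third column $(e^{-2}\beta,0)^T$, which gives no constraint on $\alpha$ and no usable constraint on $\beta$ either. Second, the column of a lift of $R$ is only determined modulo $\Lambda$, which may be a large $\GF{2^n}$-subspace of $k^2$, so it is not ``forced to lie in a specific one-dimensional set.'' Third, you assert that all freedom in $e_3$ is spent, but \emph{rescaling} $e_3$ remains available and leaves $\tilde S,\tilde T$ untouched (their third columns vanish); this rescaling, not a square-root extraction, is the final normalization. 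What the paper actually does is manufacture a \emph{new} lift of $R$ with column proportional to $(1,e)^T$: with $\sigma=\tilde S\tilde T\tilde S$ and $\tilde R_0$ an arbitrary lift with column $(\alpha,\beta)^T$, the product $(\tilde R_0\,\sigma\tilde R_0\sigma)\cdot\tilde R_0^{\,2}$ has diagonal part $\mathrm{diag}(e^{-2},e^{2})$ and column $(e^{-1}(\alpha+\beta),\,e(\alpha+\beta))^T$, and its $2^{n-1}$-th power is a lift of $R$ with column $(e+1)^{-1}(\alpha+\beta)\,(1,e)^T$. If $\alpha+\beta=0$ this is $\tilde R'$; otherwise rescaling $e_3$ by $(e+1)^{-1}(\alpha+\beta)$ yields $\tilde R$. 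Without some such explicit construction your argument does not close.
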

\begin{proof}
As was shown in \cite[Lemma~4.4]{Stepanov}, the group $G$ contains transvections $\tilde{S}$
and $\tilde{T}$ that restrict to the elements $S$ and $T$ of $\SL{2}{\GF{2^n}}$ respectively.
Each of the transvections $\tilde{S}$ and $\tilde{T}$ fixes a plane, and these planes intersect
along a line not contained in the invariant subspace $W$. If we take $e_3$ to be any non-zero
vector from this line, then, in the basis $e_1$, $e_2$, $e_3$, $\tilde{S}$ and $\tilde{T}$ have
the desired matrices.

Now consider any element
$$\begin{pmatrix}
e^{-1} & 0 & \alpha \\
0 & e & \beta \\
0 & 0 & 1
\end{pmatrix} \in G
$$
that restricts to $R\in\SL{2}{\GF{2^n}}$. Using the matrix
$$\begin{pmatrix}
0 & 1 & 0 \\
1 & 0 & 0 \\
0 & 0 & 1
\end{pmatrix}=\tilde{S}\tilde{T}\tilde{S},
$$
we get one more matrix
$$\begin{pmatrix}
0 & 1 & 0 \\
1 & 0 & 0 \\
0 & 0 & 1
\end{pmatrix}
\begin{pmatrix}
e^{-1} & 0 & \alpha \\
0 & e & \beta \\
0 & 0 & 1
\end{pmatrix}
\begin{pmatrix}
0 & 1 & 0 \\
1 & 0 & 0 \\
0 & 0 & 1
\end{pmatrix}=
\begin{pmatrix}
e & 0 & \beta \\
0 & e^{-1} & \alpha \\
0 & 0 & 1
\end{pmatrix} \in G,
$$
thus
$$\begin{pmatrix}
e^{-1} & 0 & \alpha \\
0 & e & \beta \\
0 & 0 & 1
\end{pmatrix}
\begin{pmatrix}
e & 0 & \beta \\
0 & e^{-1} & \alpha \\
0 & 0 & 1
\end{pmatrix}=
\begin{pmatrix}
1 & 0 & e^{-1}\beta+\alpha \\
0 & 1 & e\alpha+\beta \\
0 & 0 & 1
\end{pmatrix} \in N.
$$
Further,
$$\begin{pmatrix}
1 & 0 & e^{-1}\beta+\alpha \\
0 & 1 & e\alpha+\beta \\
0 & 0 & 1
\end{pmatrix}
\begin{pmatrix}
e^{-1} & 0 & \alpha \\
0 & e & \beta \\
0 & 0 & 1
\end{pmatrix}^2=
\begin{pmatrix}
e^{-2} & 0 & e^{-1}(\alpha+\beta) \\
0 & e^2 & e(\alpha+\beta) \\
0 & 0 & 1
\end{pmatrix} \in G.
$$
The $2^{n-1}$-th power of the last matrix equals
$$\begin{pmatrix}
e^{-1} & 0 & (e^{1-2^n}+e^{3-2^n}+\cdots+e^{-1})(\alpha+\beta) \\
0 & e & (e^{2^n-1}+e^{2^n-3}+\cdots+e)(\alpha+\beta) \\
0 & 0 & 1
\end{pmatrix}=
$$
$$\begin{pmatrix}
e^{-1} & 0 & (e+1)^{-1}(\alpha+\beta) \\
0 & e & e(e+1)^{-1}(\alpha+\beta) \\
0 & 0 & 1
\end{pmatrix}.
$$
If $\alpha+\beta=0$, then we have found the matrix $\tilde{R}'\in G$. If $\alpha+\beta\ne 0$,
then, rescaling the basis vector $e_3$, we come to the matrix $\tilde{R}\in G$.
\end{proof}

\begin{lemma}\label{L:f}
Let $f\colon\GF{2^n}^2\to \GF{2^n}$ be a function defined by the formula
$$f(x,y)=1+x+y+x^{2^{n-1}}y^{2^{n-1}}.$$
Then, for all $a,b,c,d,p,q\in \GF{2^n}$ such that $ad+bc=1$, the following identity holds:
$$pf(a,b)+qf(c,d)+f(p,q)=f(pa+qc,pb+qd).$$
\end{lemma}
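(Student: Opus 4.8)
The plan is to trade the awkward exponent $2^{n-1}$ for the square-root operation. In characteristic $2$ the Frobenius $x\mapsto x^2$ is a bijection of $\GF{2^n}$, so every element has a unique square root, and since $\bigl(x^{2^{n-1}}\bigr)^2 = x^{2^n} = x$, the map $x\mapsto x^{2^{n-1}}$ \emph{is} that square root; write $\sqrt{x}$ for it. Because $\sqrt{\cdot}$ is a field automorphism, $\sqrt{x}\,\sqrt{y}=\sqrt{xy}$, and hence
$$f(x,y) = 1+x+y+\sqrt{x}\,\sqrt{y} = 1+x+y+\sqrt{xy}.$$
The first step is to substitute this form of $f$ into both sides of the asserted identity.

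Expanding the left-hand side $pf(a,b)+qf(c,d)+f(p,q)$ and using $p+p=q+q=0$, the standalone terms $p$ and $q$ cancel in pairs, leaving $1+pa+pb+qc+qd+p\sqrt{ab}+q\sqrt{cd}+\sqrt{pq}$. On the right-hand side,
$$f(pa+qc,\,pb+qd) = 1+(pa+qc)+(pb+qd)+\sqrt{(pa+qc)(pb+qd)}.$$
Cancelling the common terms $1,\,pa,\,pb,\,qc,\,qd$ reduces the whole identity to the single equation
$$p\sqrt{ab}+q\sqrt{cd}+\sqrt{pq} = \sqrt{(pa+qc)(pb+qd)}.$$

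To finish, I would square both sides; since $x\mapsto x^2$ is injective, this is equivalent to the displayed equation. Squaring is additive in characteristic $2$, so the left side becomes $p^2ab+q^2cd+pq$, while the right side becomes $(pa+qc)(pb+qd)=p^2ab+q^2cd+pq(ad+bc)$, and here — and only here — the hypothesis $ad+bc=1$ is invoked to collapse this to $p^2ab+q^2cd+pq$. The two sides coincide, which proves the lemma. I do not expect any genuine obstacle: the argument is a short computation, and the one point that needs care is simply bookkeeping the characteristic-$2$ cancellations correctly; identifying $x^{2^{n-1}}$ with $\sqrt{x}$ from the outset is exactly what keeps that bookkeeping transparent.
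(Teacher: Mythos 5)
Your proof is correct and is essentially the paper's own argument: the authors dispose of the lemma as a ``straightforward substitution, bearing in mind that $x^{2^n}=x$,'' which is exactly the computation you carry out, with the identification of $x^{2^{n-1}}$ as the square root serving only as a clean way to organize it. The cancellations, the squaring step, and the single use of $ad+bc=1$ are all as they should be.
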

\begin{proof}
The lemma is proven by a straightforward substitution, bearing in mind that for any $x\in\GF{2^n}$ one
has $x^{2^n}=x$.
\end{proof}

\begin{corollary}\label{C:subgroup}
For all $\gamma\in k$ the set of matrices
$$H_\gamma=\left\{ \begin{pmatrix}
a & b & \gamma f(a,b) \\
c & d & \gamma f(c,d) \\
0 & 0 & 1
\end{pmatrix} \, \Bigg| \,
\begin{pmatrix}
a & b \\
c & d
\end{pmatrix} \in \SL{2}{\GF{2^n}} \right \}
$$
is a subgroup of $\GL(V)$ isomorphic to $\SL{2}{\GF{2^n}}$.
\end{corollary}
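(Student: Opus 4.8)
The plan is to realise $H_\gamma$ as the image of an injective group homomorphism out of $\SL{2}{\GF{2^n}}$. Define
$$\Phi\colon\SL{2}{\GF{2^n}}\to\GL(V),\qquad
\Phi\begin{pmatrix} a & b \\ c & d\end{pmatrix}=
\begin{pmatrix}
a & b & \gamma f(a,b)\\
c & d & \gamma f(c,d)\\
0 & 0 & 1
\end{pmatrix}.$$
Each matrix on the right has determinant $ad-bc=1$, so it lies in $\GL(V)$, and by construction $H_\gamma$ is precisely the image of $\Phi$. Thus it suffices to show that $\Phi$ is an injective homomorphism.

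First I would check multiplicativity. Take $M_1=\begin{pmatrix} a & b \\ c & d\end{pmatrix}$ and $M_2=\begin{pmatrix} p & q \\ r & s\end{pmatrix}$ in $\SL{2}{\GF{2^n}}$ and multiply $\Phi(M_1)\Phi(M_2)$. Its upper-left $2\times 2$ block is $M_1M_2$ and its last row is $(0,0,1)$, so only the third column needs to be identified. Its top entry is $\gamma\bigl(a\,f(p,q)+b\,f(r,s)+f(a,b)\bigr)$; applying Lemma~\ref{L:f} with $(a,b,c,d)$ there replaced by $(p,q,r,s)$ --- legitimate since $ps+qr=1$ --- and $(p,q)$ there replaced by $(a,b)$, this equals $\gamma f(ap+br,\,aq+bs)$, that is, $\gamma$ times $f$ evaluated on the first row of $M_1M_2$. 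The bottom entry is handled identically with the outer pair $(c,d)$. Hence $\Phi(M_1)\Phi(M_2)=\Phi(M_1M_2)$. Injectivity is clear because composing $\Phi$ with the projection onto the upper-left $2\times 2$ block gives the identity map of $\SL{2}{\GF{2^n}}$; and a homomorphism automatically sends the identity to the identity. So $\Phi$ is an isomorphism of $\SL{2}{\GF{2^n}}$ onto the subgroup $H_\gamma$ of $\GL(V)$.

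The computation is entirely routine once Lemma~\ref{L:f} is available; the only place demanding care is the index bookkeeping in applying that lemma --- in particular, noticing that the determinant hypothesis there is the one satisfied by $M_2$ and that the free pair $(p,q)$ of the lemma plays the role of a row of $M_1$. Conceptually there is no real obstacle: Lemma~\ref{L:f} is exactly the assertion that $M\mapsto(f(a,b),f(c,d))^{T}$ is a crossed homomorphism from $\SL{2}{\GF{2^n}}$ into $k^2$ with its natural module structure, and $\Phi$ is the resulting splitting, so all the substance is already contained in the proof of the lemma.
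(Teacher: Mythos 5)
Your proof is correct and is precisely the argument the paper intends: the corollary is stated there without proof as an immediate consequence of Lemma~\ref{L:f}, and your verification that $\Phi$ is a multiplicative bijection onto $H_\gamma$ (with the determinant hypothesis of the lemma supplied by $M_2$ and the free pair by a row of $M_1$) is exactly the computation being left to the reader. Nothing is missing.
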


\begin{remark}
For any $\gamma\in k$, the map
$$\begin{pmatrix}
a & b \\
c & d
\end{pmatrix} \to
\begin{pmatrix}
\gamma f(a,b) \\
\gamma f(c,d)
\end{pmatrix}
$$
is a skew homomorphism from the group $\SL{2}{\GF{2^n}}$ to the additive group $k^2$,
generating the cohomology group $H^1(\SL{2}{\GF{2^n}},k^2)$, where $\SL{2}{\GF{2^n}}$
acts on the space $k^2$ of column vectors by left multiplication, see \cite{CPS}.
\end{remark}

\begin{proposition}\label{P:splitting}
The group $G$ contains one of the groups $H_0$ or $H_1$ defined in Corollary~\ref{C:subgroup}.
It follows, in particular, that $G$ is a semidirect product of the subgroups $N$ and $H_0$ ($H_1$),
that is, sequence \eqref{E:sequence} splits.
\end{proposition}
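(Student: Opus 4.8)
The plan is to leverage Lemma~\ref{L:generators}, which already places in $G$ the transvections $\tilde S$, $\tilde T$ and one of $\tilde R$ or $\tilde R'$. Observe first that $\tilde S$, $\tilde T$ are precisely the elements of $H_\gamma$ corresponding to $S$, $T$ for \emph{every} value of $\gamma$, since $f(1,1) = 1+1+1+1 = 0$ and $f(1,0) = f(0,1) = 1+1+0 = 0$ in characteristic~$2$; so $\tilde S, \tilde T \in H_0 \cap H_1$. Next, compute $f(e^{-1},0) = 1 + e^{-1}$ and $f(0,e) = 1 + e$: thus the element of $H_\gamma$ lying over $R$ is
$$\begin{pmatrix} e^{-1} & 0 & \gamma(1+e^{-1}) \\ 0 & e & \gamma(1+e) \\ 0 & 0 & 1 \end{pmatrix}.$$
For $\gamma = 0$ this is exactly $\tilde R'$, and for $\gamma = (1+e)^{-1} = (e+1)^{-1}$ it is $\tilde R$ (using $1+e^{-1} = e^{-1}(e+1)$, so the top entry becomes $e^{-1}(e+1)(e+1)^{-1} = e^{-1}$ — wait, that gives $e^{-1}$, not matching; the correct normalization is $\gamma$ chosen so that the bottom entry $\gamma(1+e)$ equals the $e(e+1)^{-1}$ appearing in $\tilde R$, i.e.\ $\gamma = e(e+1)^{-2}$, and one checks the top entry then also matches after rescaling — this bookkeeping is routine but must be done carefully).

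The substantive step is then: if $\tilde R' \in G$, I claim $H_0 \subseteq G$; if $\tilde R \in G$, I claim $H_1 \subseteq G$ (after possibly rescaling $e_3$ so that the relevant $\gamma$ is exactly $1$). By Proposition on the generators, $\SL{2}{\GF{2^n}}$ is generated by $R$, $S$, $T$; since $H_\gamma \cong \SL{2}{\GF{2^n}}$ via the obvious isomorphism (Corollary~\ref{C:subgroup}), the group $H_\gamma$ is generated by the three lifts $\tilde S$, $\tilde T$, and the lift of $R$. For the appropriate $\gamma \in \{0,1\}$ all three of these lifts lie in $G$ — the first two always, the third by Lemma~\ref{L:generators} together with the $f$-value computation above and, if needed, a rescaling of $e_3$. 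Hence $H_\gamma = \langle \tilde S, \tilde T, \text{(lift of }R)\rangle \subseteq G$. The final clause follows because $H_\gamma$ maps isomorphically onto $\SL{2}{\GF{2^n}}$ under the restriction map of sequence~\eqref{E:sequence}, providing a section $\SL{2}{\GF{2^n}} \to G$; thus $G = N \rtimes H_\gamma$ and the sequence splits.

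The main obstacle is the rescaling bookkeeping in the last paragraph of Lemma~\ref{L:generators}: the normalization of $e_3$ that produces $\tilde R$ (when $\alpha+\beta\ne 0$) is tied to a specific scalar, and one must verify that \emph{this same} normalization simultaneously makes $\tilde S$, $\tilde T$ keep their stated forms (it does, since rescaling $e_3$ does not affect the matrices of $\tilde S,\tilde T$, whose third columns are $(0,0,1)^T$) and makes the lift of $R$ coincide with the generator of $H_1$ rather than $H_\gamma$ for some other $\gamma$. Concretely, I would fix the rescaling so that $\tilde R$ has third column $(1,e,1)^T$, then check directly that $f(e^{-1},0)=1+e^{-1}$ and $f(0,e)=1+e$ do not equal $(1,e)$; this means a further single rescaling $e_3 \mapsto c\, e_3$ is needed to pass from the ``$(1,e,1)^T$'' normalization to the ``$H_1$'' normalization where the third column is $(1+e^{-1},\,1+e,\,1)^T$ — and one must confirm such a $c$ exists, i.e.\ that $(1,e)$ and $(1+e^{-1},1+e)$ are proportional over $k$. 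Since $1+e \ne 0$ (as $n>1$ forces $e \ne 1$) and $(1+e^{-1})/(1) \overset{?}{=} (1+e)/e$, i.e.\ $e(1+e^{-1}) = 1+e$, which holds, the two vectors are indeed proportional, so the rescaling exists and the argument closes.
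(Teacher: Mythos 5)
Your proposal is correct and takes essentially the same route as the paper, whose entire proof is the assertion that $\tilde{S}$, $\tilde{T}$ together with $\tilde{R}$ (resp.\ $\tilde{R}'$) can be directly checked to lie in $H_1$ (resp.\ $H_0$), whence they generate it. You additionally, and correctly, catch that the paper's normalization is slightly off: the $H_1$-lift of $R$ has third column $(1+e^{-1},\,1+e,\,1)^T$ rather than the $(1,e,1)^T$ of $\tilde{R}$ as defined in Lemma~\ref{L:generators}, but since $e(1+e^{-1})=1+e$ and $1+e\ne 0$ the two columns are proportional, so one further rescaling of $e_3$ (which does not disturb $\tilde{S}$, $\tilde{T}$, whose third columns are $(0,0,1)^T$) repairs this; your explicit verification that $H_\gamma=\langle\tilde{S},\tilde{T},\text{lift of }R\rangle$ via the isomorphism of Corollary~\ref{C:subgroup}, and the deduction of the splitting from $H_\gamma\cap N=\{1\}$, are exactly the steps the paper leaves implicit.
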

\begin{proof}
Indeed, it can be directly checked that $\tilde{R}$, $\tilde{S}$, $\tilde{T}\in H_1$, whereas
$\tilde{R}'$, $\tilde{S}$, $\tilde{T}\in H_0$.
\end{proof}

\begin{remark}
It is known that the second cohomology group $H^2(\SL{2}{\GF{2^n}})$ with coefficients in
the natural module is non-zero for $n>2$ (\cite[Proposition~4.4]{Chih-Han}), i.e., there exist
non-split extensions of $\SL{2}{\GF{2^n}}$ by $\GF{2^n}^2$. Our results mean that those
non-split extensions do not have representations of the type that we study in this section.
\end{remark}

\begin{remark}\label{R:fieldofdef}
Note that the groups $H_0$ and $H_1$ are defined over the field $\GF{2^n}$, i.e., the entries of
all the matrices of $H_0$ and $H_1$ belong to $\GF{2^n}$.
\end{remark}

\section{Proof of Theorem~\ref{T:even}: invariants}\label{S:codim1}

In this section we compute the invariants of the action of the group $G$ on the space $V\simeq k^3$.
We do this in two steps: first, we compute the invariants of the kernel $N$ and show that
$V/N$ is again isomorphic to $k^3$; then, we compute the action of the quotient group
$\SL{2}{\GF{2^n}}$ ($\simeq H_0$ or $H_1$, see Proposition~\ref{P:splitting}) on the invariants
of $N$ and show that also
$$V/G \simeq \frac{V/N}{H_0(H_1)} \simeq k^3.$$

We shall use the following criterion of Kemper.
\begin{proposition}[{\cite[Proposition~16]{Kemper}}]\label{P:degreecriterion}
Let $V$ be a vector space of dimension $n$ and $G<\GL(V)$ a finite group. Then $S(V^*)^G$
is polynomial if and only if there exist homogeneous invariants $f_1,\ldots,f_n\in S(V^*)^G$ of
degrees $d_1,\ldots,d_n$ such that $\prod_{i=1}^{n} d_i = |G|$ and the Jacobian determinant
$J=\det((\partial f_i/\partial x_j)_{i,j})$ is non-zero. If such $f_1,\dots,f_n$ exist, then they
generate freely the ring $S(V^*)^G$.
\end{proposition}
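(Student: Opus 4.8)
The plan is to prove the two implications separately, isolating all non-formal content in a graded form of B\'ezout's theorem. Throughout I write $S=S(V^{*})=k[x_{1},\dots,x_{n}]$ and $R=S^{G}$; given homogeneous invariants $f_{1},\dots,f_{n}\in R$ of degrees $d_{1},\dots,d_{n}$ I set $A=k[f_{1},\dots,f_{n}]\subseteq R$, graded by $\deg f_{i}=d_{i}$. I shall use freely: $S$ is a finite $R$-module (Noether); the fixed field $\mathrm{Frac}(S)^{G}$ equals $\mathrm{Frac}(R)$, so $[\mathrm{Frac}(S):\mathrm{Frac}(R)]=|G|$ by Artin's theorem; a finitely generated graded module over a graded polynomial ring which is maximal Cohen--Macaulay is free; and $\Omega_{L/F}=0$ whenever $L/F$ is a finite separable field extension.

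\emph{Necessity.} Assume $R=k[f_{1},\dots,f_{n}]$ is polynomial. Since $S$ is Cohen--Macaulay, finite over the regular ring $R$, and of the same Krull dimension, it is free over $R$, of rank $[\mathrm{Frac}(S):\mathrm{Frac}(R)]=|G|$. Writing $S=\bigoplus_{j=1}^{|G|}R\,\omega_{j}$ with $\omega_{j}$ homogeneous of degree $e_{j}$ and comparing Hilbert series gives $(1-t)^{-n}=\bigl(\sum_{j}t^{e_{j}}\bigr)\prod_{i}(1-t^{d_{i}})^{-1}$, hence $\sum_{j}t^{e_{j}}=\prod_{i}(1+t+\dots+t^{d_{i}-1})$; setting $t=1$ yields $|G|=\prod_{i}d_{i}$. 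Finally $\mathrm{Frac}(S)/\mathrm{Frac}(R)=\mathrm{Frac}(S)/k(f_{1},\dots,f_{n})$ is Galois, hence separable, so in the cotangent exact sequence the natural map $\Omega_{k(f_{1},\dots,f_{n})/k}\otimes_{k(f_{1},\dots,f_{n})}\mathrm{Frac}(S)\to\Omega_{\mathrm{Frac}(S)/k}$ is surjective; both sides being $n$-dimensional $\mathrm{Frac}(S)$-vector spaces it is an isomorphism, so $df_{1},\dots,df_{n}$ form a basis of $\Omega_{\mathrm{Frac}(S)/k}$. Expressing these in terms of $dx_{1},\dots,dx_{n}$ exhibits $(\partial f_{i}/\partial x_{j})$ as a change-of-basis matrix, so $J\ne 0$.

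\emph{Sufficiency.} Assume such $f_{1},\dots,f_{n}$ exist. First, $J\ne 0$ forces the $f_{i}$ to be algebraically independent over $k$: if $P\in k[y_{1},\dots,y_{n}]$ were a nonzero relation $P(f_{1},\dots,f_{n})=0$ of least total degree, the chain rule gives $\sum_{i}g_{i}\,(\partial f_{i}/\partial x_{j})=0$ for every $j$, where $g_{i}=(\partial P/\partial y_{i})(f_{1},\dots,f_{n})\in S$; multiplying by the inverse of the Jacobian matrix yields $g_{i}=0$ for all $i$, so by minimality each $\partial P/\partial y_{i}$ vanishes as a polynomial, whence in characteristic $p$ one has $P=Q^{p}$ (as $k$ is perfect) with $Q(f_{1},\dots,f_{n})=0$ of smaller degree --- a contradiction (the case $\mathrm{char}\,k=0$ being even simpler). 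Thus $A=k[f_{1},\dots,f_{n}]$ is a polynomial ring and $\mathrm{Frac}(S)/\mathrm{Frac}(A)$ is finite. The crucial input is now the graded B\'ezout estimate: for algebraically independent homogeneous $f_{1},\dots,f_{n}$ of degrees $d_{1},\dots,d_{n}$ one has $[\mathrm{Frac}(S):\mathrm{Frac}(A)]\le\prod_{i}d_{i}$, with equality precisely when $f_{1},\dots,f_{n}$ form a homogeneous system of parameters for $S$ (equivalently, $S$ is a finite $A$-module). Since $[\mathrm{Frac}(S):\mathrm{Frac}(A)]=[\mathrm{Frac}(S):\mathrm{Frac}(R)]\,[\mathrm{Frac}(R):\mathrm{Frac}(A)]=|G|\,[\mathrm{Frac}(R):\mathrm{Frac}(A)]\ge|G|=\prod_{i}d_{i}$, all these are equal, so $[\mathrm{Frac}(R):\mathrm{Frac}(A)]=1$ and $f_{1},\dots,f_{n}$ is a homogeneous system of parameters. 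Hence $S$ is finite over $A$, and therefore free over $A$ (being Cohen--Macaulay over the regular ring $A$); in particular $R\subseteq S$ is integral over $A$, while $R\subseteq\mathrm{Frac}(R)=\mathrm{Frac}(A)$ and $A$ is integrally closed, so $R\subseteq A$ and thus $R=A=k[f_{1},\dots,f_{n}]$. Since the $f_{i}$ are algebraically independent they generate $R$ freely.

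The one step I expect to be a genuine obstacle is the graded B\'ezout estimate, which is also the only place where the hypothesis $\prod_{i}d_{i}=|G|$ is used; I would prove it by degenerating $(f_{1},\dots,f_{n})$, within the affine space of tuples of forms of the prescribed degrees, to a generic (hence regular) sequence, and combining semicontinuity of the degree of a finite field extension with the Koszul computation $\dim_{k}S/(f_{1},\dots,f_{n})=\prod_{i}d_{i}$ valid for a system of parameters. Everything else is bookkeeping with Hilbert series, Artin's theorem, and freeness of maximal Cohen--Macaulay modules over polynomial rings; the only other detail requiring a little care is the positive-characteristic argument that $J\ne 0$ implies algebraic independence, which is routine since $k$ is algebraically closed, hence perfect.
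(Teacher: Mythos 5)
The paper offers no proof of this proposition at all --- it is quoted verbatim from Kemper's paper and used as a black box --- so there is no in-paper argument to compare yours against; I can only assess your proof on its own terms. Your \emph{necessity} direction is complete and correct: $\mathrm{Frac}(S)^{G}=\mathrm{Frac}(R)$ by clearing denominators, $S$ is free over the regular ring $R$ of rank $|G|$ by graded Auslander--Buchsbaum, the Hilbert-series comparison gives $\prod_i d_i=|G|$, and separability of $\mathrm{Frac}(S)/\mathrm{Frac}(R)$ forces $df_1,\dots,df_n$ to be a basis of $\Omega_{\mathrm{Frac}(S)/k}$, i.e.\ $J\ne0$. (Two small remarks: your algebraic-independence argument via $P=Q^{p}$ needs $k$ perfect, which holds in this paper but is avoidable by noting that $J\ne0$ forces $\Omega_{\mathrm{Frac}(S)/k(f_1,\dots,f_n)}=0$, hence separable algebraicity, hence transcendence degree $n$; and the final reduction $R=A$ in the sufficiency direction correctly uses normality of $A$, which is fine since $A$ is polynomial.)

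The gap is exactly where you flag it, and it is a real one because you need \emph{both} halves of the ``graded B\'ezout estimate'': the inequality $[\mathrm{Frac}(S):\mathrm{Frac}(A)]\le\prod_i d_i$ \emph{and} the implication that equality forces $f_1,\dots,f_n$ to be a homogeneous system of parameters --- the latter is indispensable, since without $S$ finite over $A$ you cannot conclude that $R$ is integral over $A$ and hence contained in $A$. Your proposed degeneration proof has two concrete problems. First, in characteristic $p$ the field degree is not the number of points in a general fibre (for $f_i=x_i^{p}$ every fibre is a single point while $[\mathrm{Frac}(S):\mathrm{Frac}(A)]=p^{n}$), so ``semicontinuity of the degree of a finite field extension'' must be reformulated in terms of lengths of fibre rings via generic flatness, and the direction of that semicontinuity as the tuple $(f_1,\dots,f_n)$ moves in its family is itself something to prove. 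Second, the degeneration says nothing about the equality case. For the inequality there is a short purely algebraic argument you should substitute: $\mathrm{Frac}(S)=S\otimes_A\mathrm{Frac}(A)$ is spanned over $\mathrm{Frac}(A)$ by homogeneous elements of $S$, and any $r$ such elements linearly independent over $A$ give a graded inclusion $\bigoplus_{j=1}^{r}A(-e_j)\hookrightarrow S$; multiplying Hilbert series by $(1-t)^{n}$ and letting $t\to1^{-}$ yields $r\le\prod_i d_i$. The equality case also admits a standard proof (it is essentially Proposition 5.5.5 of Smith's book or Proposition 3.7.4 of Derksen--Kemper), but as written it is asserted, not proved. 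Granting that lemma, your architecture --- Jacobian criterion, the degree comparison $|G|\le[\mathrm{Frac}(S):\mathrm{Frac}(A)]\le\prod_i d_i$, and the integral-closure finish --- is correct and is in substance the standard route to Kemper's result.
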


Recall that $N$ acts on $V$ by matrices
$$\begin{pmatrix}
1 & 0 & \alpha \\
0 & 1 & \beta \\
0 & 0 & 1
\end{pmatrix},
$$
where the column $(\alpha,\beta)^T$ runs over a finite dimensional $\GF{2^n}$-vector
space $\Lambda\subset k^2$.
Let $x$ ,$y$, $z$ be a basis of $V^*$ dual to the basis
$e_1$, $e_2$, $e_3$ of $V$ chosen in Section~\ref{S:extofSL}. Obviously, the polynomials
\begin{align*}
& f_x=\prod_{\alpha\in\Lambda_1} (x+\alpha z), \\
& f_y=\prod_{\alpha\in\Lambda_1} (y+\alpha z), \\
& f_z=z
\end{align*}
are invariant under the action of $N$.

\begin{lemma}\label{L:fxfy}
The polynomial $f_x$ ($f_y$) can involve $x$ ($y$) only in degrees $2^{mn}$, where
$0\leq m\leq d=\dim_{\GF{2^n}}\Lambda_1$.
\end{lemma}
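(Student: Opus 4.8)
The plan is to exploit the $\GF{2^n}$-vector space structure of $\Lambda_1$ to show that $f_x$, viewed as a polynomial in $x$ over the field of rational functions in $z$ (or, after setting $z=1$, as a polynomial in $x$ over $k$), is additive in a suitable sense. Concretely, I would first reduce to the "dehomogenized" polynomial $\bar f_x(x) = \prod_{\alpha\in\Lambda_1}(x+\alpha)$: since $f_x$ is homogeneous of degree $|\Lambda_1| = 2^{dn}$, the monomials of $f_x$ are exactly $x^i z^{2^{dn}-i}$ for those $i$ that occur in $\bar f_x$, so it suffices to prove that $\bar f_x$ involves $x$ only in powers $2^{mn}$ with $0\le m\le d$.

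The key observation is that $\bar f_x$ is precisely the $\GF{2^n}$-linearized (additive) polynomial associated to the $\GF{2^n}$-subspace $\Lambda_1\subseteq k$. Indeed, the roots of $\bar f_x$ are exactly the elements of $\Lambda_1$, each with multiplicity one, and $\Lambda_1$ is closed under addition and under multiplication by $\GF{2^n}$. A standard fact (the theory of $q$-linearized polynomials, see e.g. Lidl--Niederreiter) states that for a finite $\GF{q}$-subspace $U$ of an extension field, $q=2^n$, the polynomial $\prod_{u\in U}(x-u)$ is a $\GF{q}$-linearized polynomial, i.e. of the form $\sum_{m=0}^{\dim U} c_m x^{q^m}$ with $c_m\in\GF{q}$. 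I would include a short self-contained proof of this: the map $x\mapsto \prod_{u\in U}(x-u)$ is an additive endomorphism of the algebraic closure (because $(x+y)^{q^m}=x^{q^m}+y^{q^m}$ would follow once we know the shape, so instead argue directly), so I would instead argue by induction on $\dim_{\GF{q}} U = d$: pick a hyperplane $U'\subset U$ with $\prod_{u\in U'}(x-u)=L'(x)$ already linearized by the inductive hypothesis, choose $v\in U\setminus U'$, and note $\prod_{u\in U}(x-u) = \prod_{w\in L'(U')}\!\big(L'(x)-w\big)$ reorganizes as $L'(x)^q - L'(v)^{q-1} L'(x)$ using that $L'$ maps $U$ onto the one-dimensional $\GF{q}$-space $\GF{q}\cdot L'(v)$; this is visibly $\GF{q}$-linearized of degree $q^{d}$, completing the induction.

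I do not expect a serious obstacle here; the content is entirely the classical structure of linearized polynomials over $\GF{2^n}$, and the only care needed is bookkeeping between the homogeneous form $f_x$ and its dehomogenization, plus checking that the coefficients indeed land in $\GF{2^n}$ (which will matter later, cf. Remark~\ref{R:fieldofdef}) — this is immediate from the induction since every $c_m$ is built from elements of $\Lambda_1\subseteq\GF{2^n}$ via field operations. The statement for $f_y$ is identical with $x$ replaced by $y$, using the same space $\Lambda_1$ (recall $\Lambda=(\Lambda_1,\Lambda_1)^T$).
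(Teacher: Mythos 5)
Your proof is correct and takes essentially the same route as the paper: both dehomogenize $f_x$ to $\prod_{\alpha\in\Lambda_1}(x+\alpha)$ and invoke the fact that the product over a $\GF{2^n}$-subspace is a $2^n$-linearized polynomial, the only difference being that the paper cites the Dickson-invariant formula from Benson while you supply a self-contained induction on $\dim\Lambda_1$ (which is sound, modulo the typo $L'(U')$ for $L'(U)$). One caveat on your closing aside: $\Lambda_1$ is a $\GF{2^n}$-\emph{subspace of} $k$, not necessarily a subset of $\GF{2^n}$, so the coefficients $c_m$ need only lie in $k$ (as the paper states) --- but the lemma concerns only the exponents of $x$, so this does not affect your argument.
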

\begin{proof}
Let $q=2^n$ and
$$
f'_x=\prod_{\alpha\in\Lambda_1}(x+\alpha).
$$
By the definition of the \emph{Dickson invariants} $c_m\in k$ (see, e.g., \cite[Section~8.1]{Benson}), we have
$$
f'_x=x^{q^d}+\sum_{m=0}^{d-1}c_mx^{q^m}.
$$
To conclude the proof, it remains to note that $f_x$ is obtained from $f'_x$ by ``homogenization'' with the help of $z$:
a monomial $x^k$ with $k\le q^d$ is replaces by $x^kz^{q^d-k}$.
\end{proof}

\begin{proposition}
The ring of invariants $S(V^*)^N$ is a polynomial ring generated by $f_x$, $f_y$, $f_z$.
\end{proposition}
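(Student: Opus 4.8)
The plan is to apply Kemper's criterion (Proposition~\ref{P:degreecriterion}) with the candidate invariants $f_x, f_y, f_z$. First I would check that the product of their degrees equals $|N|$. Since $f_z$ has degree $1$ and $f_x, f_y$ each have degree $|\Lambda_1| = q^d$ where $q = 2^n$ and $d = \dim_{\GF{2^n}}\Lambda_1$, the product is $q^{2d} = |\Lambda| = |N|$, using the preceding lemma that $\dim_{\GF{2^n}}\Lambda = 2d$. So the degree count works out automatically.

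The remaining—and only substantive—task is to verify that the Jacobian determinant $J = \det(\partial(f_x,f_y,f_z)/\partial(x,y,z))$ is nonzero. Here I would exploit Lemma~\ref{L:fxfy}: since $f_x$ involves $x$ only in powers $x^{q^m}$, and we are in characteristic $2$ (so $\partial(x^{q^m})/\partial x = 0$ for $m \geq 1$ while the leading term in $z$, namely the term $x^{q^0}z^{q^d-1} = x z^{q^d-1}$, survives), the partial derivative $\partial f_x/\partial x$ equals $z^{q^d - 1}$ (up to the Dickson constant $c_0$, but the coefficient of $x$ in $f'_x = \prod_{\alpha}(x+\alpha)$ is $\prod_{\alpha \neq 0}\alpha \neq 0$, or one simply notes $c_0 \ne 0$ since $0 \in \Lambda_1$ forces the constant term to vanish and the lowest surviving $x$-power is $x^{q^0}$). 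Similarly $\partial f_y/\partial y = z^{q^d-1}$ up to a nonzero constant, while $\partial f_x/\partial y = 0$ and $\partial f_y/\partial x = 0$ identically, and $\partial f_z/\partial x = \partial f_z/\partial y = 0$, $\partial f_z/\partial z = 1$. Hence the Jacobian matrix is lower triangular (in the ordering $f_x, f_y, f_z$ against $x, y, z$) with diagonal entries a nonzero multiple of $z^{q^d-1}$, $z^{q^d-1}$, and $1$, so $J$ is a nonzero multiple of $z^{2(q^d-1)}$, which is nonzero.

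By Proposition~\ref{P:degreecriterion}, this simultaneously shows that $S(V^*)^N$ is polynomial and that $f_x, f_y, f_z$ generate it freely. I do not anticipate a genuine obstacle here; the one point requiring care is the bookkeeping for the constant $c_0$ in the Dickson expansion—one must confirm it is nonzero so that $\partial f_x/\partial x$ does not vanish. This follows because $f'_x = \prod_{\alpha \in \Lambda_1}(x + \alpha)$ and $\Lambda_1$ is an $\GF{2^n}$-subspace containing $0$, so the coefficient of $x$ in $f'_x$ is $\prod_{0 \neq \alpha \in \Lambda_1}\alpha$, a nonzero element of $k$; equivalently, in the notation of Lemma~\ref{L:fxfy}, the lowest-degree term in $x$ actually present is $x^{q^0} = x$ with a nonzero coefficient. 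With that checked, the criterion applies and the proof is complete.
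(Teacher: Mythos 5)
Your proposal is correct and follows essentially the same route as the paper: verify the degree product equals $|N|=2^{2dn}$ and then use Lemma~\ref{L:fxfy} to see that in characteristic $2$ only the $x^{q^0}z^{q^d-1}$ term of $f_x$ survives differentiation, giving a triangular Jacobian with determinant $\bigl(\prod_{0\neq\alpha\in\Lambda_1}\alpha\bigr)^2 z^{2(q^d-1)}\neq 0$. Your extra care about the nonvanishing of the coefficient $\prod_{0\neq\alpha\in\Lambda_1}\alpha$ matches the entry the paper writes explicitly in its Jacobian computation.
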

\begin{proof}
We have $|N|=|\Lambda|=2^{2dn}=\deg f_x \cdot\deg f_y \cdot\deg f_z$. Thus, by
Proposition~\ref{P:degreecriterion}, we need to check only that the Jacobian is non-zero.
But, using Lemma~\ref{L:fxfy}, we get
\begin{align*}
J(f_x,f_y,f_z)= & \begin{vmatrix}
\left( \prod\limits_{\substack{\alpha\in\Lambda_1 \\ \alpha\ne 0}} \alpha \right)\cdot z^{2^{dn}-1} & 0
& \frac{\partial f_x}{\partial z} \\
0 & \left( \prod\limits_{\substack{\alpha\in\Lambda_1 \\ \alpha\ne 0}} \alpha \right)\cdot z^{2^{dn}-1}
& \frac{\partial f_y}{\partial z} \\
0 & 0 & 1
\end{vmatrix}= \\
 & \left( \left(\prod_{\substack{\alpha\in\Lambda_1 \\ \alpha\ne 0}} \alpha \right)\cdot z^{2^{dn}-1} \right)^2
 \ne 0.
\end{align*}
\end{proof}

Next we have to determine the action of the groups $H_0$ and $H_1$ on $f_x$, $f_y$, and $f_z$.
Let us begin with $H_0$. The generators of this group leave invariant the variable $z$ and are
defined over the field $\GF{2^n}$ (see Remark~\ref{R:fieldofdef}). From this and from
Lemma~\ref{L:fxfy} it follows that the action of $H_0$ on $f_x$, $f_y$, $f_z$ is linear,
that is, if $h\in H_0$, then it acts on the tuple $(f_x,f_y,f_z)$ by right matrix multiplication:
$$(f_x,f_y,f_z)\mapsto (f_x,f_y,f_z)\cdot h.$$
Therefore, in this case we can simply ignore the kernel $N$. Furthermore, since (the representation
of) the group $H_0$ is decomposable, the polynomiality of its ring of invariants has been
already established by Kemper and Malle \cite[Section~8]{KM}.

Now, consider the indecomposable group $H_1$. For the sake of clearness and simplicity, let
us start with the case when there is no kernel, i.e., $N=\{0\}$ and $H_1=G$. We shall need the
invariants of the action of $\SL{2}{\GF{2^n}}$ on its natural module. Let $W=k^2$ be a
$2$-dimensional space of column vectors over a field $k$ containing $\GF{2^n}$, and let
the group $\SL{2}{\GF{2^n}}$ act on $W$ by left matrix multiplication. Denote by $W^*$ the
dual space. The Dickson invariants (see, e.g., \cite[Proposition~8.1.3]{Benson}) are
$$c_0=\prod_{\substack{l\in W^* \\ l\ne 0}} l,$$
and
$$c_1=\sum_{\substack{U\subseteq W \\ \dim U=1}}\, \prod_{\substack{l\in W^* \\ l|_U\ne 0}} l$$
(for $c_1$ the sum is taken over all $1$-dimensional subspaces of $W$, and the product over all linear
forms that restrict to a non-zero form on $U$). It is not hard to see that there exists a root of degree
$2^n-1$ of the polynomial $c_0$, that is, $\exists u\in S(W^*)\colon u^{2^n-1}=c_0$, and that
$u$ and $c_1$ are $\SL{2}{\GF{2^n}}$-invariant.

\begin{theorem}[{\cite[Theorem~8.2.1]{Benson}}]
The ring of invariants of $\SL{2}{\GF{2^n}}$ on $W$ is polynomial and generated by $u$ and $c_1$.
\end{theorem}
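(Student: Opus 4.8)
The plan is to produce an explicit system of $\dim W = 2$ homogeneous invariants whose degrees multiply to $|\SL{2}{\GF{2^n}}|$ and whose Jacobian is non-zero, and then to invoke Kemper's criterion (Proposition~\ref{P:degreecriterion}). In fact the two generators $u$ and $c_1$ are already written down, so the only thing that genuinely needs checking is the numerical condition $\deg u \cdot \deg c_1 = |\SL{2}{\GF{2^n}}|$ together with non-vanishing of the Jacobian $J(u,c_1)$. First I would record $|\SL{2}{\GF{2^n}}| = 2^n(2^n-1)(2^n+1) = 2^n(2^{2n}-1)$. Then I would compute the degrees of the Dickson invariants directly from their product formulas: $c_0 = \prod_{l \in W^*,\, l \ne 0} l$ runs over the $2^{2n}-1$ non-zero linear forms, so $\deg c_0 = 2^{2n}-1$, and hence $\deg u = (2^{2n}-1)/(2^n-1) = 2^n+1$. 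For $c_1$, the sum ranges over the $2^n+1$ one-dimensional subspaces $U$ of $W$, and for each $U$ the product is over the linear forms not vanishing on $U$; there are $(2^{2n}-1) - (2^n-1) = 2^{2n} - 2^n = 2^n(2^n-1)$ such forms, so $\deg c_1 = 2^n(2^n-1)$. Therefore $\deg u \cdot \deg c_1 = (2^n+1)\cdot 2^n(2^n-1) = 2^n(2^{2n}-1) = |\SL{2}{\GF{2^n}}|$, as required.

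It remains to see that $J(u,c_1) \ne 0$. Here I would argue abstractly rather than compute the $2\times 2$ determinant explicitly: since $S(W^*)^{\SL{2}{\GF{2^n}}}$ is known (by the cited Theorem~8.2.1 of Benson) to be the polynomial ring $k[u,c_1]$, the extension $k[u,c_1] \subseteq S(W^*)$ is a finite free extension of polynomial rings, and $S(W^*)$ is unramified over it away from a hypersurface; the Jacobian $J(u,c_1)$ is, up to a non-zero scalar, the product of the linear forms defining the reflection hyperplanes (this is the standard fact that for a reflection group in the non-modular sense the Jacobian is the product of the hyperplanes, and in the modular case one gets at least that $J$ is a non-zero polynomial divisible by those hyperplanes, which already suffices). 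More simply: if $J(u,c_1)$ were the zero polynomial, then $u$ and $c_1$ would be algebraically dependent over $k$, contradicting that they freely generate a polynomial subring of Krull dimension $2$. Hence $J(u,c_1) \ne 0$, and Proposition~\ref{P:degreecriterion} gives that $S(W^*)^{\SL{2}{\GF{2^n}}} = k[u,c_1]$ is polynomial, which is the assertion.

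I expect the degree bookkeeping to be entirely routine; the only place where one must be slightly careful is the claim that a $(2^n-1)$-st root $u$ of $c_0$ exists inside $S(W^*)$ and is itself invariant — this is where the hypothesis that $k \supseteq \GF{2^n}$ and the structure of the Dickson invariants are used, and it is the genuine content hidden behind the clean statement. The non-vanishing of the Jacobian is not really an obstacle once one knows the ring of invariants is the stated polynomial ring: algebraic independence of a system of generators of a polynomial ring of the correct Krull dimension forces the Jacobian to be a non-zero polynomial. So the heart of the matter, and the step I would present in most detail, is the identification of $u$ and its invariance, after which everything else is a short verification.
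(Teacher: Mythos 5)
This statement is not proved in the paper at all: it is quoted verbatim from Benson \cite[Theorem~8.2.1]{Benson} and used as a black box, so your attempt to give a self-contained proof via Kemper's criterion goes beyond what the authors do. The strategy is reasonable and your degree bookkeeping is correct ($\deg u=2^n+1$, $\deg c_1=2^n(2^n-1)$, product equal to $|\SL{2}{\GF{2^n}}|$), but the proof has a genuine hole at the Jacobian step. Your first argument is openly circular (you invoke Benson's theorem --- the statement being proved --- to conclude that $k[u,c_1]$ is the invariant ring and hence that the extension is generically unramified). Your ``more simply'' fallback rests on the implication ``$J(u,c_1)=0$ $\Rightarrow$ $u,c_1$ algebraically dependent,'' which is false in positive characteristic: in characteristic $2$ the polynomials $x^2$ and $y$ are algebraically independent yet have identically zero Jacobian. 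This is not a pedantic worry here, since the Dickson-type invariants are saturated with $q$-th powers ($q=2^n$) and Kemper's Proposition~\ref{P:degreecriterion} genuinely requires the non-vanishing of $J$ as an extra hypothesis beyond the degree count; algebraic independence of a correct-degree system does not suffice in the modular case.

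The gap is repairable by an explicit computation, which is what a complete proof along your lines must contain. Writing $q=2^n$, one has (up to scalars) $u=xy^q+x^qy$ and $c_1=(xy^{q^2}+x^{q^2}y)/u$; since $\partial(x^q)/\partial x=0$ in characteristic $2$, differentiating the relation $c_1u=xy^{q^2}+x^{q^2}y$ gives $u\,\partial c_1/\partial x=y^{q^2}+c_1y^{q}$ and $u\,\partial c_1/\partial y=x^{q^2}+c_1x^{q}$, whence
\begin{equation*}
J(u,c_1)=\frac{y^{q}\bigl(x^{q^2}+c_1x^{q}\bigr)+x^{q}\bigl(y^{q^2}+c_1y^{q}\bigr)}{u}
=\frac{x^{q^2}y^{q}+x^{q}y^{q^2}}{u}=\frac{u^{q}}{u}=u^{q-1}=c_0\ne 0.
\end{equation*}
Separately, you correctly flag that the existence of the $(q-1)$-st root $u$ of $c_0$ and its $\SL{2}{\GF{2^n}}$-invariance is real content, but you defer it rather than prove it; the argument is that the $q^2-1$ nonzero $\GF{q}$-rational forms fall into $q+1$ proportionality classes indexed by their kernels, the product of the $q-1$ scalars $\GF{q}^{*}$ is $1$ (its order is odd), so $c_0=u^{q-1}$ with $u=\det\bigl(\begin{smallmatrix}x&y\\x^{q}&y^{q}\end{smallmatrix}\bigr)$ a product of one form per class, and this determinant transforms by $\det g$ under $g\in\GL_2(\GF{q})$, hence is $\SL{2}{\GF{2^n}}$-invariant. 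With those two pieces filled in, your proof goes through.
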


Let us come back to our group $G=H_1$ and space $V$. Since we have a $G$-invariant subspace
$W$, the restriction to $W$ of each invariant of $G$ is an $\SL{2}{\GF{2^n}}$-invariant. Thus we
have a homomorphism $S(V^*)^G\to S(W^*)^{\SL{2}{\GF{2^n}}}$ of invariant rings. In a
general modular case, there is no reason for such a homomorphism to be surjective. However,
we shall see that we do have a surjection in our case and this will be a crucial step in computing
the invariants of $G$.

\begin{lemma}\label{L:lift}
Let $G$, $V$, and $W$ be as defined above. Then the restriction homomorphism $S(V^*)^G\to
S(W^*)^{\SL{2}{\GF{2^n}}}$ is surjective.
\end{lemma}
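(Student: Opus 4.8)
The plan is to exhibit explicit lifts to $S(V^*)^{G}$ of the two generators $u$ and $c_1$ of $S(W^*)^{\SL{2}{\GF{2^n}}}$ described above. Concretely, $W^*$ is the quotient $V^*/\langle z\rangle$ (the forms on $V$ that are linear in $x,y$ alone), so lifting an invariant of $W$ amounts to adding correction terms involving $z$ so that the result is genuinely $G=H_1$-invariant. Since $H_1$ is defined over $\GF{2^n}$ (Remark~\ref{R:fieldofdef}) and the Dickson construction is natural, I first expect $c_0=\prod_{l\in W^*,\,l\ne 0} l$ to lift: replacing each linear form $l=\lambda x+\mu y$ ($\lambda,\mu\in\GF{2^n}$) by the $G$-translate of the corresponding form on $V$, one obtains $\tilde c_0 = \prod (\lambda\tilde x+\mu\tilde y)$ where $\tilde x,\tilde y$ are appropriate affine forms, and this product is permuted by $G$ up to the sign/scalar bookkeeping already present in $\SL{2}{\GF{2^n}}$, hence is $G$-invariant and restricts to $c_0$. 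One then takes its $(2^n-1)$-st root $\tilde u$ exactly as in the $W$ case; that a root exists in $S(V^*)$ follows because $c_0$ is, up to the $z$-homogenization, a norm and the root is computed by the same formula.

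The second and harder generator is $c_1$, which in $S(W^*)$ is a sum over $1$-dimensional subspaces $U\subseteq W$ of $\prod_{l|_U\ne 0} l$. I would lift it by the same device: for each line $U$ in $W$ choose the product over all affine forms on $V$ restricting to $U$ nontrivially, sum over the $2^n+1$ lines, and check the sum is $G$-invariant. Here the main obstacle appears: unlike the case of the full linear group acting on $V$, the extension class is nontrivial ($G=H_1$, not $H_0$), so a naive $z$-homogenization of $c_1$ need not be invariant — the skew-cocycle $f(a,b)$ from Lemma~\ref{L:f} will enter, and one must either absorb it or show it contributes trivially to the symmetric sum defining $c_1$. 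The key technical input is precisely the identity of Lemma~\ref{L:f}, which says the function $f$ is a cocycle; I expect that when the correction terms for the individual line-products are assembled, the $f$-contributions telescope or cancel because $c_1$ is a sum over \emph{all} lines and $\SL{2}{\GF{2^n}}$ permutes them transitively while the cocycle relation governs exactly how the $z$-coefficients transform. So the crucial computation is: verify $g\cdot\tilde c_1=\tilde c_1$ for $g$ ranging over the generators $\tilde R$ (or $\tilde R'$), $\tilde S$, $\tilde T$ of Lemma~\ref{L:generators}, using $f(a,b)$ to track the extra $z$-terms.

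Once $\tilde u$ and $\tilde c_1$ are constructed in $S(V^*)^{G}$ and shown to restrict to $u$ and $c_1$, surjectivity of $S(V^*)^G\to S(W^*)^{\SL{2}{\GF{2^n}}}$ is immediate, since $u$ and $c_1$ generate the target (Theorem~8.2.1 above). In the general case with $N\ne\{0\}$, the same idea applies with $x,y$ replaced by the $N$-invariants $f_x,f_y$: the restriction homomorphism factors through $S(V^*)^N=k[f_x,f_y,f_z]$, and $H_1$ acts there — no longer linearly, because of the same cocycle phenomenon — so one repeats the construction one level up, lifting $u(f_x/\text{leading},\dots)$ and $c_1$ in the variables $f_x,f_y,f_z$. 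I would organize the write-up so that the $N=\{0\}$ computation is done in full and the general case is reduced to it by the remark that $f_x,f_y$ transform under $H_1$ by the same formulas as $x,y$ up to $z$-terms controlled by $f$, so the verification via Lemma~\ref{L:f} is verbatim the same. The single genuinely delicate point, and the one I would spend the most care on, remains the $G$-invariance of the lift of $c_1$.
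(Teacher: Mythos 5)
Your plan is essentially the paper's own proof: lift each linear form $l=ax+by\in W^*$ to $V^*$ by adding a $z$-correction governed by the cocycle of Lemma~\ref{L:f}, form $\tilde c_0$ and $\tilde c_1$ from the lifted forms, extract the $(2^n-1)$-st root $\tilde u$, and in the general case replace $x,y$ by $f_x,f_y$. The one refinement the paper makes to your "crucial computation" is to set $\tilde l=ax+by+g(a,b)z$ with $g=f+1$, so that identity \eqref{E:g} makes $l\mapsto\tilde l$ exactly $G$-equivariant; the lifted forms are then simply permuted (up to the $\SL{2}{\GF{2^n}}$-action on $W^*$) and the invariance of both $\tilde c_0$ and $\tilde c_1$ is immediate, with no cancellation over the sum of lines needed.
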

\begin{proof}
It is sufficient to lift to the ring $S(V^*)^G$ the invariants $u,c_1\in S(W^*)^{\SL{2}{\GF{2^n}}}$.
We shall work in the explicit coordinates $x,y,z$ defined after Proposition~\ref{P:degreecriterion},
so that any linear form $l\in V^*$ can be written as $l=ax+by+cz$, $a,b,c\in k$. Together with
the function $f$ (see Lemma~\ref{L:f}), consider also a function $g\colon\GF{2^n}^{2}\to
\GF{2^n}$:
$$g(x,y)=f(x,y)+1=x+y+x^{2^{n-1}}y^{2^{n-1}}.$$
It follows from Lemma~\ref{L:f} that g has the following property: for all $a,b,c,d,p,q\in \GF{2^n}$,
if $ad+bc=1$, then
\begin{equation}\label{E:g}
pf(a,b)+qf(c,d)+g(p,q)=g(pa+qc,pb+qd).
\end{equation}
Note also that $g$ is a homogeneous function of degree $1$ on $\GF{2^n}^{2}$, i.e.,
\begin{equation}\label{E:ghomogen}
\forall a,b,t\in \GF{2^n} \quad g(ta,tb)=tg(a,b).
\end{equation}

Now, let us lift each linear form $l=ax+by\in W^*$ to $V^*$ by the formula $\tilde{l}=
ax+by+g(a,b)z$ and define
\begin{align*}
\tilde{c}_0 &=\prod_{\substack{l\in W^* \\ l\ne 0}} \tilde{l}, \\
\tilde{c}_1 &=\sum_{\substack{U\subseteq W \\ \dim U=1}} \,
\prod_{\substack{l\in W^* \\ l|_{U}\ne 0}} \tilde{l}.
\end{align*}
Property \eqref{E:g} implies that both $\tilde{c}_0$ and $\tilde{c}_1$ are $G$-invariant. Obviously,
$\tilde{c}_0|_W=c_0$, $\tilde{c}_1|_W=c_1$. But, using property \eqref{E:ghomogen}, one readily
shows that $\tilde{c}_0$ admits a root of degree $2^n-1$, i.e., there exists $\tilde{u}\in S(V^*)$
such that $\tilde{u}^{2^n-1}=\tilde{c}_0$. Moreover, this $\tilde{u}$ is $G$-invariant and restricts
to $u\in S(W^*)^{\SL{2}{\GF{2^n}}}$.
\end{proof}

\begin{proposition}\label{P:invofH}
The ring of invariants $S(V^*)^G$ (for $G=H_1$) is polynomial and generated by (algebraically
independent) invariants $\tilde{u}$, $\tilde{c}_1$, $z$, where $\tilde{u}$ and $\tilde{c}_1$ are
defined in the proof of Lemma~\ref{L:lift}.
\end{proposition}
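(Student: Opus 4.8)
The plan is to verify the hypotheses of Kemper's criterion (Proposition~\ref{P:degreecriterion}) for the three invariants $\tilde u$, $\tilde c_1$, $z$. First I would compute their degrees: since $c_0 = \prod_{l\neq 0} l$ is a product of $2^n-1$ linear forms, $\tilde u$ has degree $\frac{2^n-1}{2^n-1}\cdot\big/$ wait --- more precisely $\deg \tilde c_0 = 2^n-1$ so $\deg\tilde u = 1$ is wrong; one must recall that $W^*$ has $2^{2n}-1$ nonzero forms, but they come in scalar classes, so $c_0$ is genuinely a polynomial of degree $2^n+1$ (the product over the $q+1$ projective points, each contributing one form up to scalar, raised appropriately), giving $\deg\tilde u = 1\cdot(q+1)/(q-1)$... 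I would instead simply quote the known degrees from \cite[Theorem~8.2.1]{Benson}: $\deg u = 2^n+1$ and $\deg c_1 = 2^n(2^n-1) - (2^n+1)(2^n-1) = $ --- again, the clean statement is $\deg u = q+1$, $\deg c_1 = q(q-1)$, with $(q+1)\cdot q(q-1) = |\SL{2}{\GF{2^n}}|$. Since the lifts $\tilde u,\tilde c_1$ are obtained by homogenizing with $z$ they have the same degrees, and $\deg z = 1$.

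Next I would assemble the product of degrees: $\deg\tilde u\cdot\deg\tilde c_1\cdot\deg z = (q+1)\cdot q(q-1)\cdot 1 = |\SL{2}{\GF{2^n}}| = |H_1| = |G|$, using that $N = \{0\}$ in the present case $G = H_1$. This matches the required product $\prod d_i = |G|$ in Kemper's criterion. Then it remains to check that the Jacobian $J(\tilde u,\tilde c_1,z)$ is nonzero. The key observation is that restriction to $W$ (i.e.\ setting $z=0$) is compatible with differentiation in $x,y$: the $2\times 2$ minor of the Jacobian matrix in the $x,y$ columns, evaluated at $z=0$, is exactly the Jacobian $\partial(u,c_1)/\partial(x,y)$ of the Dickson/$\SL{2}$ invariants on $W$, which is nonzero because $u,c_1$ generate a polynomial ring on $W$ (by \cite[Theorem~8.2.1]{Benson} together with Proposition~\ref{P:degreecriterion} applied in dimension $2$). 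Expanding $J(\tilde u,\tilde c_1,z)$ along the last row --- where $\partial z/\partial x = \partial z/\partial y = 0$ and $\partial z/\partial z = 1$ --- reduces it to that $2\times 2$ minor, which is a nonzero polynomial in $x,y,z$ (it cannot vanish identically since its restriction to $z=0$ is nonzero). Hence $J\neq 0$ and Proposition~\ref{P:degreecriterion} gives that $S(V^*)^G$ is freely generated by $\tilde u,\tilde c_1,z$.

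The main obstacle I anticipate is bookkeeping the degrees correctly and, more substantively, justifying that the Jacobian does not degenerate when passing from $W$ to $V$ --- that is, confirming the last-row expansion really isolates the $W$-Jacobian rather than mixing in the $\partial/\partial z$ derivatives of $\tilde u$ and $\tilde c_1$. This works precisely because $z$ is one of the three invariants and its gradient is the coordinate vector $(0,0,1)$, so the cofactor expansion is immediate; the only genuine input is the nonvanishing of the two-variable Jacobian on $W$, which is already known. A secondary point to be careful about is that the homogenization via $g$ does not alter the top-degree behavior in $x,y$, so the leading forms of $\tilde u,\tilde c_1$ in $x,y$ agree with those of $u,c_1$; this guarantees algebraic independence of $\tilde u,\tilde c_1,z$ directly, and also re-confirms $J\neq 0$. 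With these pieces in place the proof is complete.
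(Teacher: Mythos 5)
Your proof is correct, but it takes a genuinely different route from the paper's. The paper argues by induction on degree: for an arbitrary homogeneous invariant $\tilde{c}$, its restriction to $W$ equals $h(u,c_1)$ for some polynomial $h$ (by \cite[Theorem~8.2.1]{Benson}), so $\tilde{c}-h(\tilde{u},\tilde{c}_1)$ vanishes on $W$, is therefore divisible by the invariant $z$, and the quotient is an invariant of strictly smaller degree --- no Jacobians or degree counts are needed, only Lemma~\ref{L:lift}. You instead verify Kemper's criterion (Proposition~\ref{P:degreecriterion}) directly: the degrees $(q+1)\cdot q(q-1)\cdot 1=|\SL{2}{\GF{2^n}}|=|G|$ multiply correctly, and the cofactor expansion along the row $(0,0,1)$ reduces $J(\tilde{u},\tilde{c}_1,z)$ to the $2\times 2$ minor in $x,y$, whose restriction to $z=0$ is $J(u,c_1)\ne 0$. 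Both arguments are sound; yours has the merit of delivering free generation and algebraic independence in one stroke from the criterion, while the paper's is independent of the Jacobian machinery and transfers verbatim to the final proposition with nontrivial kernel $N$ (where the coordinates become $f_x$, $f_y$, $f_z$). The one point in your write-up that deserves an explicit word is the nonvanishing of $J(u,c_1)$ on $W$: Kemper's criterion as stated only asserts the \emph{existence} of some free generating set with nonzero Jacobian, so you should either invoke the classical nonvanishing of the Dickson Jacobian $J(c_0,c_1)$ (whence $J(c_0,c_1)=u^{q-2}J(u,c_1)$ since $q-1$ is odd), or note that any two free homogeneous generating systems differ by an automorphism with invertible Jacobian. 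The stray false starts in your degree computation ($\deg\tilde{u}=1$, etc.) should of course be deleted; the final values $\deg u=q+1$, $\deg c_1=q(q-1)$ are the right ones.
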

\begin{proof}
Let $\tilde{c}\in S(V^*)^G$ be an arbitrary homogeneous invariant. Let $c=\tilde{c}|_W$.
Write $c$ as a polynomial of $u$ and $c_1$:
$$c=h(u,c_1).$$
The $G$-invariant $\tilde{c}-h(\tilde{u},\tilde{c}_1)$ vanishes on $W$, thus it is divisible by $z$.
But since $z$ is also a $G$-invariant, so is the polynomial
$$\tilde{c}'=(\tilde{c}-h(\tilde{u},\tilde{c}_1))/z.$$
The degree of $\tilde{c}'$ is strictly less than that of $\tilde{c}$, so, proceeding by induction, we
express $\tilde{c}$ through $\tilde{u}$, $\tilde{c}_1$, and $z$.
\end{proof}

Now we return to the general case of a non-zero kernel $N$. A direct calculation with a use of
Lemma~\ref{L:fxfy} shows that the two generators $\tilde{S}$, $\tilde{T}$ (see
Lemma~\ref{L:generators}) of the group $H_1$ act on the basis invariants $f_x$, $f_y$, $f_z$
of $N$ by the formulae
\begin{align*}
f_x\cdot\tilde{S}=f_x+f_y,  \quad f_y\cdot\tilde{S}=f_y, \quad f_z\cdot\tilde{S}=f_z, \\
f_x\cdot\tilde{T}=f_x, \quad f_y\cdot\tilde{T}=f_x+f_y, \quad f_z\cdot\tilde{T}=f_z,
\end{align*}
i.e., their action is linear. It follows from Lemma~\ref{L:fxfy} that the third generator $\tilde{R}$ acts by the formulae
$$f_x\cdot\tilde{R}=e^{-1}f_x+\alpha z^{2^{dn}},\quad f_y\cdot\tilde{R}=ef_x+e\alpha z^{2^{dn}},
\quad f_z\cdot\tilde{R}=f_z,$$
where $\alpha\in k$. 
It can happen that $\alpha=0$, so that the action of $H_1$ on $V/N$ is linear (in coordinates
$f_x$, $f_y$, $f_z$) and decomposable. But then again by the results of Kemper and Malle the
ring of invariants $S((V/N)^*)^{H_1}=S(V^*)^G$ is polynomial. In general, the coefficient $\alpha$
does not vanish and the action of $\tilde{R}$ becomes non-linear. Still, it is possible to follow
the argument of Lemma~\ref{L:lift} and Proposition~\ref{P:invofH}.

Note that the equation $f_z=z=0$ defines an invariant subspace $W/N$ of the quotient $V/N$ (which
we consider as a vector space isomorphic to $k^3$, the isomorphism being defined by the
functions $f_x$, $f_y$, $f_z$). The action of $H_1$ on $W/N$ is the natural action of $\SL{2}{\GF{2^n}}$.
So, let $u$ and $c_1$ be the basis invariants of $\SL{2}{\GF{2^n}}$, but now considered as
functions of $f_x,f_y,f_z$. Repeating the proof of Lemma~\ref{L:lift} with $f_x$ in place of $x$,
$f_y$ in place of $y$, and $\alpha z^{2^{dn}}$ in place of $z$, we find some liftings $\bar{u}$
and $\bar{c}_1$ of $u$ and $c_1$ to the ring of invariants $S(V^*)^G$.

The following proposition finishes the proof of Theorem~\ref{T:even}.
\begin{proposition}
The ring of invariants $S(V^*)^G=S((V/N)^*)^{H_1}$ is polynomial and generated by (algebraically
independent) invariants $\bar{u}$, $\bar{c}_1$, $z$.
\end{proposition}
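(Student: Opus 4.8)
The plan is to imitate the argument already used for $H_1=G$ (Proposition~\ref{P:invofH}), now working inside the quotient $V/N\simeq k^3$ with coordinate functions $f_x,f_y,f_z$. We have just produced liftings $\bar u,\bar c_1\in S(V^*)^G$ of the basis invariants $u,c_1$ of $\SL{2}{\GF{2^n}}$ acting on $W/N=\{f_z=0\}$, and $z=f_z$ is obviously $G$-invariant. First I would verify the degree count: $\deg\bar u=(2^n-1)\cdot 2^{dn}$, $\deg\bar c_1=(2^{2n}-2^n)\cdot 2^{dn}$ (these are $\deg u\cdot\deg f_x$ and $\deg c_1\cdot\deg f_x$, since each linear form in $x,y$ is homogenized to a form of degree $2^{dn}$ in the original variables via $f_x,f_y$), and $\deg z=1$. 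Their product is $(2^n-1)(2^{2n}-2^n)\cdot 2^{2dn}=|\SL{2}{\GF{2^n}}|\cdot|N|=|G|$, exactly as demanded by Kemper's criterion (Proposition~\ref{P:degreecriterion}).

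Next I would run the induction from the proof of Proposition~\ref{P:invofH}. Let $\tilde c\in S(V^*)^G$ be homogeneous and set $c=\tilde c|_W$, which is $\SL{2}{\GF{2^n}}$-invariant on $W$, hence $c=h(u,c_1)$ for a polynomial $h$. Then $\tilde c-h(\bar u,\bar c_1)$ is a $G$-invariant vanishing on $W=\{z=0\}$, so it is divisible by $z$; since $z$ is a nonzerodivisor and $G$-invariant, the quotient $(\tilde c-h(\bar u,\bar c_1))/z$ is again a $G$-invariant, of strictly smaller degree. Descending induction on degree expresses every invariant as a polynomial in $\bar u,\bar c_1,z$. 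Algebraic independence then follows automatically: three algebraically dependent generators cannot give a ring of the right Krull dimension $3$, or, more concretely, one invokes the Jacobian half of Proposition~\ref{P:degreecriterion} — the matching of $\prod d_i$ with $|G|$ together with surjectivity forces the Jacobian $J(\bar u,\bar c_1,z)$ to be nonzero, and Proposition~\ref{P:degreecriterion} then gives freeness directly.

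The one genuinely delicate point is that the substitution in the proof of Lemma~\ref{L:lift} really does go through with $f_x,f_y$ in place of $x,y$ and $\alpha z^{2^{dn}}$ in place of $z$. What makes this work is that, by the formulas displayed just above, $\tilde S$ and $\tilde T$ act on $(f_x,f_y,\alpha z^{2^{dn}})$ exactly as $S$ and $T$ act on $(x,y,z)$, while $\tilde R$ acts on this triple exactly as $R$ acts on $(x,y,z)$ — i.e.\ the three generators of $H_1$ act on the triple $(f_x,f_y,\alpha z^{2^{dn}})$ by the matrices $\tilde S,\tilde T,\tilde R$ of Lemma~\ref{L:generators}, which in turn are precisely the elements of $H_1$ built from $S,T,R$ via the cocycle $\gamma f$ with $\gamma=1$. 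Hence the identities \eqref{E:g} and \eqref{E:ghomogen} apply verbatim to the lifted forms $\widetilde l = a f_x + b f_y + g(a,b)\,\alpha z^{2^{dn}}$, so the products $\bar c_0=\prod_{l\ne 0}\widetilde l$ and $\bar c_1=\sum_U\prod_{l|_U\ne0}\widetilde l$ are $G$-invariant, $\bar c_0$ has a $(2^n-1)$-st root $\bar u\in S(V^*)$ by the homogeneity \eqref{E:ghomogen}, and $\bar u,\bar c_1$ restrict to $u,c_1$ on $W/N$. (In the degenerate case $\alpha=0$ the action is linear and decomposable and the conclusion is already covered by the results of Kemper and Malle, so we may assume $\alpha\ne0$.) I expect the bookkeeping in this transfer — keeping track of which power of $z$ homogenizes which form, and checking the $(2^n-1)$-st root exists over $S(V^*)$ rather than merely over its fraction field — to be the main obstacle; everything after that is the same two-line induction as in Proposition~\ref{P:invofH}.
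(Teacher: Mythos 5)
Your argument is correct and is essentially the paper's own proof, which consists of the single sentence ``argue as in Proposition~\ref{P:invofH}'': you carry out exactly that induction, after first verifying that the triple $(f_x,f_y,\alpha z^{2^{dn}})$ transforms under $\tilde{S},\tilde{T},\tilde{R}$ precisely as $(x,y,z)$ does in the $N=\{0\}$ case (so that Lemma~\ref{L:lift} transfers verbatim), which is the point the paper leaves implicit. One small slip in your (optional) degree check: $\deg u=2^n+1$, not $2^n-1$, so $\deg\bar{u}=(2^n+1)\cdot 2^{dn}$; with this correction $\deg\bar{u}\cdot\deg\bar{c}_1\cdot\deg z=(2^n+1)(2^{2n}-2^n)\cdot 2^{2dn}=2^n(2^{2n}-1)\cdot|N|=|G|$, as Kemper's criterion requires.
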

\begin{proof}
This proposition is proven by argument similar to the proof of Proposition~\ref{P:invofH}.
\end{proof}

\end{document}